\newcommand{\doublewidetilde}[1]{{%
  \mathpalette\double@widetilde{#1}%
}}
\newcommand{\double@widetilde}[2]{%
  \sbox\z@{$\m@th#1\widetilde{#2}$}%
  \ht\z@=.9\ht\z@
  \widetilde{\box\z@}%
}
\numberwithin{equation}{section}
\newtheorem{theorem}{Theorem}[section]
\newtheorem{definition}[theorem]{Definition}
\newtheorem{lemma}[theorem]{Lemma}
\newtheorem{remark}[theorem]{Remark}
\newtheorem{prop}[theorem]{Proposition}
\newtheoremstyle{case}{}{}{}{}{}{:}{ }{}
\theoremstyle{case}
\def\R{{\mathbb R}}
\def\C{{\mathbb C}}
\newcommand{\BL}{{\mathrm{B}{\mathrm{L}}}}
\newcommand{\rp}{{\phi_{1}}}
\newcommand{\ip}{{\phi_{2}}}
\newcommand{\nphi}{\mathbf n^\phi}
\newcommand{\sspan}{\mathrm{span}}
\newcommand{\mfq}{\mathfrak q}
\newcommand{\mbq}{\mathbf Q}
\newcommand{\cD}{\overline D} 
\newcommand{\ce}{\mathcal E}
\newcommand{\cqq}{C_{\mfq}^{\, \mbq}}
\newcommand{\dist}{\mathrm{dist }}
\newcommand{\ve}{\varepsilon}
\begin{document}

\title{Restriction estimates to complex hypersurfaces}
\author[Juyoung Lee]{Juyoung Lee}
\author[Sanghyuk Lee]{Sanghyuk Lee}

\address{Department of Mathematical Sciences and RIM, Seoul National University, Seoul 08826, Republic of  Korea}
\email{shklee@snu.ac.kr}
\email{ljy219@snu.ac.kr } 

\keywords{restriction estimate, complex surfaces}

\maketitle
\begin{abstract} 
The restriction problem is better understood for hypersurfaces and  recent progresses  have been made  by  bilinear and multilinear approaches and most recently  polynomial partitioning method which is combined with
those estimates.  However, for surfaces with codimension bigger than 1,  bilinear and multilinear generalization of  restriction estimates 
are  more involved and effectiveness of these multilinear estimates is not so well understood yet. Regarding the restriction problem for the surfaces with codimensions bigger than 1,  the current state of the art is still  at the level of $TT^*$ method which is known to be useful for obtaining  $L^q$--$L^2$ restriction estimates. 
 In this paper, we consider a special type of codimension 2 surfaces which are given by graphs of complex analytic functions and attempt to 
 make progress beyond the  $L^2$ restriction estimates. 
\end{abstract}

\section{Introduction}

Let $S$ be  a smooth compact submanifold $\R^n$($n\leq 3$)  with the usual surface measure  $d\sigma$ (the induced Lebesgue measure) on $S$. 
The $L^p(\mathbb R^n)-L^q(d\sigma)$ boundedness of the restriction operator $f\to \widehat{f}|_S$ has been extensively studied since  the restriction phenomena was first observed by Stein in  the late 1960s. 
As is standard in literature nowadays,  it is more convenient to work with
the dual operator  $\widehat{fd\sigma}$ which is called \emph{extension operator}. 
The ultimate goal of the restriction problem is to characterize  $L^p(S)$--$L^q(\R^n)$   boundedness of $f\to  \widehat{fd\sigma}$  in terms of geometric features of underlying (sub)-manifold $S$. 
Particularly,  when $S$ is the sphere, it was conjectured that $\widehat{fd\sigma}$ should map $L^p(S)$ boundedly to $L^q(\R^n)$ if and only if  $q\geq \frac{p'(n+1)}{n-1}$ and $q>\frac{2n}{n-1}$. 
There is a large body of  literature which is  devoted to this problem. Over the last couple of decades, the bilinear and multilinear approaches have been proved to be most effective, and via these new methods substantial progress has been made. Recently, polynomial partitioning method gave currently the best known result. We refer the reader to \cite{BCT}, \cite{BG}, \cite{G1} for the latest developments.

Restriction phenomena to submanifolds other than hypersurfaces were also studied by some authors. Roughly, we may say that restriction problem is better understood when the dimension of manifold is 1 or its codimension is one. 
When the dimension of $S$ is 1(namely, $S$ is a curve), the restriction estimate is by now fairly well understood \cite{BOS1}, \cite{BOS2}, \cite{BOS3}. However, not much is known regarding the restriction estimate to surfaces of  the intermediate dimensions, that is to say,  when the codimension $k$ of the manifold is between $1$ and $n-1$. The restriction problem for surfaces with codimension $k=2$ was first studied by Christ\cite{C} and later Mockenhaupt\cite{M} with general $k$. 
For certain types of surfaces they established the optimal $L^q \rightarrow L^2$ restriction estimates which can be regarded as an extension of  the Stein-Tomas theorem which concerns  
 the hypersurfaces with nonvanishing Gaussian curvature.  There are some results \cite{BL}, \cite{Ob} beyond  the $L^q \rightarrow L^2$ restriction estimate for the surfaces with intermediate codimensions.  However, it can  be said that,  for most surfaces with codimension between 1 and $n-1$, the current state of the restriction problem is hardly beyond that of the Stein-Tomas theorem in the case of hypersurfaces.

To discuss  the previous results for the restriction estimates for a surface of codimension $k\ge 2$, we consider the extension operator which is given by the surface $(\xi, \Phi(\xi))$. Let $D$ be a bounded region in $\R^d$ and $\Phi : D \rightarrow \R^k$ be a smooth function. Discarding harmless factor associated to parametrization of the surface measure,  it is enough to consider  the operator $E$
which is defined by
$$Ef(x,t)=\int_D e^{i\left( x\cdot \xi +t\cdot \Phi(\xi) \right)}f(\xi)d\xi, \quad \left( x,t\right)\in \R^m\times \R^k.$$
Especially, if $\Phi$ is given  by nontrival quadratic forms,  the optimal $L^2-L^q$ boundedness of $E$ is well understood. In this case, using a Knapp-type example, it is easy to see that $E$ may be bounded from $L^p$ to $L^q$ only if $\frac{d+2k}{q}\leq d( 1-\frac{1}{p} )$.\footnote{This is also true if $Q$ has not trivial second order term at a point in $D$.}  Hence, the best possible $L^2-L^q$ bound is that with $q=\frac{2(d+2k)}{d}$.  This was shown to be true when  $\Phi$ satisfies a suitable curvature condition  \cite{C, M}.

In this paper, we are concerned with restriction estimates for special type of surfaces of codimension 2 which are given by graphs of holomorphic functions. Identifying the complex number  with a point in $\R^2$, complex hypersurface in $\C^n$ can be considered as a  manifold of  codimension 2 in $\R^{2n}$. We begin with introducing some notations. Let us set 
$$\widetilde w\odot w=\Re\Big(\sum_{j=1}^n \widetilde w_j\overline{w_j}\Big), $$
where $\widetilde w=\left( \widetilde w_1, \cdots , \widetilde w_n \right)$, $w=\left( w_1, \cdots , w_n \right)\in \mathbb C^n$. 
For a  function $\phi: \mathbb C^{n-1}\to \mathbb C$ and a bounded measurable set $D$ we define 
$$\mathcal E_D^\phi f(w)=\int_D e^{i w\odot(z,\phi(z))}f(z) dz, \quad w\in \mathbb C^{n},$$
where  $dz$ denotes the usual Lebesgue measure on $(2d-2)$-dimensional Euclidean space. 

If we write $z=(z_1, \dots, z_{n-1})$ and $z_j=x_j+iy_j$, $1\le j \le n-1$,  the above operator is the extension operator given by 
the surface $(x_1,y_1,\dots,x_{n-1}, y_{n-1}, \Re \phi(x+iy), \Im \phi(x+iy))$.
We define the (complex) Hessian of $\phi$ by
\[
H\phi:=\Big [ 
{\partial_{z_i}\partial_{z_j}}\phi\Big]_{1\le i, j\le n-1}.
\]
The natural non-degeneracy condition on $\phi$ is that $H\phi$ has nonzero determinant on $\overline D$.  
It is not difficult to see  that  $|\mathcal E_{\mathbb C^{n-1}}^\phi(\chi)(w)|\lesssim  |w_n|^{-(n-1)}$ whenever $\phi$ is holomorphic and $\det(H\phi)\neq 0$ on the support of $\chi\in C_c^\infty$ (see Lemma \ref{decay_est}). Thus by the 
$TT^*$ argument it is not difficult to see  that,  for $q\ge   \frac{2n+2}{n-1}$, 
$\|\mathcal E_D^\phi f\|_{q}\lesssim \|f\|_2$.
The main result of this paper is to show that  this can be improved when 
$n$ is even.

\begin{theorem}\label{main}
Let $n$ be an even number $\ge 4$ and $D$ be a bounded region in $\C^{n-1}$. 
 Suppose  $\phi(z)$ is holomorphic on $\cD$ and  $\det H\phi\neq 0$ on $\cD$. 
Then,  for $p>\frac{2(n+2)}{n}$, $\Vert \mathcal E_D^\phi f \Vert_{p}\leq C_p \Vert f \Vert_{p}$   for some constant $C_p$.
\end{theorem}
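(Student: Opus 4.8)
The plan is to pass to a pure quadratic holomorphic model, extract from the holomorphy a tensor‑product structure that is invisible in the underlying real geometry, and iterate a low–complex–dimensional estimate against that structure while summing over the modulus of the last frequency variable.

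\emph{Step 1: reductions.} Subtracting the affine part of $\phi$ is an affine change of the frequency variable $w$ and preserves all $L^p$ norms, so we may assume $\phi(0)=0$ and $\nabla\phi(0)=0$; a complex‑linear change of the base variable $z$, again realized by a linear map in $w$‑space at the cost of a Jacobian constant, lets us assume $H\phi(0)=\mathrm{Id}$, i.e. $\phi(z)=\tfrac12\textstyle\sum_{j=1}^{n-1}z_j^2+R(z)$ with $R=O(|z|^3)$. The model surface and the operator $\mathcal E_D^\phi$ are invariant under the parabolic dilations $(z,\tau)\mapsto(\lambda z,\lambda^2\tau)$ (with the dual action on $w$), and a computation shows that for $p>2+\tfrac1{n-1}$ — hence in the asserted range — this dilation improves the constant on a ball of radius $\delta\ll1$. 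Covering $\cD$ by such balls, on each of which $R$ is a genuine perturbation of the quadratic model, and summing (the pieces have disjoint Fourier supports, so $\|\sum_k g_k\|_p\lesssim(\sum_k\|g_k\|_p^{p'})^{1/p'}$ for $p\ge2$), it is enough to prove $\|\mathcal E_D^{\phi_0}f\|_{L^p(\mathbb R^{2n})}\le C_p\|f\|_{L^p}$ on a fixed ball $D$ for $\phi_0(z)=\tfrac12\textstyle\sum_{j=1}^{n-1}z_j^2$, together with the perturbative and summation bookkeeping.

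\emph{Step 2: holomorphy and tensor structure.} Writing $b=\overline{w_n}$ and using $w\odot(z,\phi_0(z))=\Re\big(\sum_j\overline{w_j}z_j+\tfrac b2\sum_j z_j^2\big)$, one recognizes $w'\mapsto\mathcal E_D^{\phi_0}f(w',w_n)$, under $\mathbb C^{n-1}\cong\mathbb R^{2n-2}$, as the Fourier transform of the chirp‑modulated function $m_b\,(f\chi_D)$, where the multiplier
\[
m_b(z)=\exp\!\Big(i\,\Re\Big(\tfrac b2\textstyle\sum_{j=1}^{n-1}z_j^2\Big)\Big)=\prod_{j=1}^{n-1}\exp\!\big(i\,\Re(\tfrac b2 z_j^2)\big)
\]
is a \emph{tensor product} of one‑complex‑variable chirps. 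Consequently
\[
\|\mathcal E_D^{\phi_0}f\|_{L^p(\mathbb R^{2n})}^p=\int_{\mathbb C}\big\|\widehat{m_b\,(f\chi_D)}\,\big\|_{L^p(\mathbb R^{2n-2})}^p\,db .
\]
For $|b|\lesssim1$ the inner norm is $\lesssim\|f\|_p$ by Hausdorff–Young (here $p>2$ forces $p'<2$) together with the compact support of $f\chi_D$, and integrating over $\{|b|\lesssim1\}$ disposes of this range. The core of the argument is $|b|\gtrsim1$.

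\emph{Step 3: the main estimate and the role of the parity.} For $|b|\gtrsim1$ one decomposes each one‑dimensional chirp into wave packets at the natural scale $|b|^{-1/2}$; this writes $m_b$ as a superposition of modulated bumps whose frequencies form a $|b|^{1/2}$‑separated set in a ball of radius $|b|$, so the resulting pieces of $\widehat{m_b(f\chi_D)}$ have essentially disjoint Fourier supports. Feeding in the $L^2$ orthogonality of these pieces, the pointwise bound $|\mathcal E_{\mathbb C^{n-1}}^{\phi_0}(\chi)(w)|\lesssim|w_n|^{-(n-1)}$ of Lemma~\ref{decay_est}, and — this is where $n$ being even enters, through the grouping/iteration of the $n-1$ tensor factors, with the one‑complex‑dimensional Stein–Tomas bound $\|\mathcal E_{D_0}^{\psi}g\|_{L^{6}}\lesssim\|g\|_{L^2}$ for $\psi(z)=\tfrac12 z^2$ as the base input — one aims to establish
\[
\big\|\widehat{m_b\,(f\chi_D)}\,\big\|_{L^p(\mathbb R^{2n-2})}\lesssim|b|^{-\frac n2\left(1-\frac2p\right)}\,\|f\|_{L^p},\qquad |b|\gtrsim1 .
\]
Since $\int_{|b|\gtrsim1}|b|^{-\frac n2(1-\frac2p)p}\,db<\infty$ exactly when $\tfrac n2(p-2)>2$, i.e. when $p>\tfrac{2(n+2)}{n}$, this closes the estimate; the perturbation and scale‑summation of Step~1 are arranged to be summable in the same range.

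\emph{Main obstacle.} The crux is Step~3: obtaining the decay exponent $\tfrac n2(1-\tfrac2p)$ rather than the weaker rate produced by a naive wave‑packet count (which gives no decay at all, and is useless), which forces one to exploit the orthogonality and the low‑dimensional restriction input efficiently and is precisely where the parity hypothesis is used. A secondary, more technical difficulty is propagating the cubic remainder $R$ through the parabolic rescaling uniformly in $|w_n|$ — it appears as a $w_n$‑dependent modulation of $f$, harmless for bounded $|w_n|$ but interacting with the scaling when $|w_n|$ is large — and checking that the induced sum over scales still converges for $p>\tfrac{2(n+2)}{n}$.
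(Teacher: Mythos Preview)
Your slicing identity
\[
\Vert\mathcal E_D^{\phi_0}f\Vert_{L^p(\mathbb R^{2n})}^p=\int_{\mathbb C}\big\Vert\,\widehat{m_b\,(f\chi_D)}\,\big\Vert_{L^p(\mathbb R^{2n-2})}^p\,db
\]
is correct, but the estimate you ask for in Step~3,
\[
\big\Vert\,\widehat{m_b\,(f\chi_D)}\,\big\Vert_{L^p(\mathbb R^{2n-2})}\lesssim |b|^{-\frac n2(1-\frac2p)}\Vert f\Vert_{L^p}\qquad(|b|\gtrsim1),
\]
is \emph{false} for general $f$, and this kills the argument. Take $f=\overline{m_b}\,\chi_D$ (which depends on $b$, but has $|f|=\chi_D$ and hence $\Vert f\Vert_p=|D|^{1/p}$). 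Then $m_b f\chi_D=\chi_D$ and $\Vert\widehat{m_b f\chi_D}\Vert_p=\Vert\widehat{\chi_D}\Vert_p$ is a nonzero constant independent of $b$, so no power of $|b|$ can appear on the right. More structurally: for fixed $b$ the map $f\mapsto\widehat{m_b f\chi_D}$ is the composition of multiplication by the unimodular function $m_b$ (an isometry on every $L^p$) with $g\mapsto\widehat{g\chi_D}$, so its $L^p\to L^p$ operator norm is \emph{independent of $b$}. A pointwise-in-$b$ bound therefore cannot carry any decay, and the integral over $|b|\gtrsim1$ diverges. The tensor factorisation you invoke is a factorisation of $m_b$, not of $f$; unless $f$ itself is a tensor product you cannot iterate the one-complex-variable estimate, and the $L^2\to L^6$ bound for $\psi(z)=\tfrac12 z^2$ that you cite is an estimate \emph{after} integrating in the last variable, so it cannot serve as a fixed-$b$ input.

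The paper's proof is entirely different and does not slice in $w_n$. It runs the Bourgain--Guth induction-on-scales scheme: one partitions the domain into $K^{-1}$-cubes $\mathfrak q$, compares the pieces $\mathcal E f_{\mathfrak q}$ on each $K$-cube $\mathbf Q$ in physical space, and sorts into a ``transversal'' case, handled by the $k$-linear restriction estimate for complex hypersurfaces (Theorem~\ref{mlrk}, proved here via multilinear Kakeya and the holomorphic normal-vector condition), and a ``confined'' case in which all significant $\mathfrak q$ have normals within $K^{-1}$ of a $(k-1)$-dimensional complex subspace, so that only $O(K^{2k-4})$ of them contribute. Parabolic rescaling (Lemma~\ref{pscaling}) closes the induction, and optimising gives
\[
p>\max_{2\le k\le n-1}\Big(\frac{2(n-k+2)}{n-k+1},\ \frac{2k}{k-1}\Big),
\]
which is minimised at $k=\tfrac n2+1$, available precisely when $n$ is even, yielding $p>\tfrac{2(n+2)}{n}$. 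The parity enters through this balancing, not through any pairing of tensor factors.
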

 
When $n=2$, the result  on the optimal range of $p,q$  was obtained by Christ, see \cite[Theorem 3.2]{C1}.  %
Our result relies on the multilinear restriction estimates for the complex surfaces and the induction arugument due to Bourgain  and Guth \cite{BG}. When $n$ is odd,  as it was shown for the quadratic surfaces with principal curvatures of different signs  \cite{BG}, the induction argument based on multilinear restriction estimate is not enough to give estimate with the exponent 
$q<\frac{2n+2}{n-1}$. 
Since  restriction of  the complex analytic surface to subspace  admits subsurface with no curved property,   unlike the case of 
elliptic surfaces  repeated use of multilinear restriction estimate is not allowed.  See Remark \ref{oddrmk}.  However, for $n=3$, the $L^p-L^{q}$ estimates for $p,q$ satisfying $1/p+2/q<1$ and $q>10/3$ were obtained in \cite{BLL}. Our result doesn't recover this and it is a manifestation that  multilinear strategy has certain inefficiency in capturing the curvature property of the underlying surface. 
 
In our proof of Theorem \ref{main},  the holomorphic assumption plays an important role. The assumption not only  makes it possible to  describe transversality condition in a simpler way but also provides good decay property of the Fourier transform of the surface measure.  Though  general forms of multilinear restriction estimates \cite{BBFL, Z} are known, 
not all of  the restriction estimates we need for our purpose appear in literature. In Section 3 we prove these restriction estimates by following the argument in \cite{BBFL} and  making use of  general multilinear Kakeya  and induction  argument.   In section 4, we prove  our main result and discuss about surfaces given by almost complex structure.

\section{Preliminaries}
In this section we review the known multilinear Kakeya and multilinear restriction estimates on which  our results are to be based. 
These estimates generalize multilinear restriction estimates for hypersurfaces \cite{BCT}. In fact, fairly general forms of these estimates can be found in \cite{BBFL}. 
Before stating their results, we introduce some notations and give the statement of  the Brascamp-Lieb inequality, which we also use later. 

\begin{theorem}[Brascamp-Lieb inequality, \cite{BCCT}]\label{BL}
Let $L_j : \R^d \rightarrow \R^{d_j}$ be linear, onto, and $p_j\geq 0$ for $1\leq j \leq m$. Then, 
\begin{equation}
\label{BL}
\int_{\R^d}\prod_{j=1}^m (f_j \circ L_j)^{p_j} \leq C\prod_{j=1}^m (\int_{\R^{d_j}}f_j)^{p_j}
\end{equation}
holds for some $C<\infty$ if and only if the following hold:
\begin{align}\label{bl1}
&\sum_{j=1}^m p_j d_j =d, \\
\label{bl2}
dimV&\leq \sum_{j=1}^m p_j dim(L_j V)\text{ for any subspace }V\subseteq \R^d.
\end{align}
\end{theorem}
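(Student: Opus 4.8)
First I would dispose of the easy direction. For the \emph{necessity} of \eqref{bl1}, test \eqref{BL} with the dilates $f_j(\lambda\,\cdot)$ in place of $f_j$: after the change of variables $x\mapsto\lambda^{-1}x$ the left-hand side scales like $\lambda^{-d}$ and the right-hand side like $\lambda^{-\sum_j p_jd_j}$, so if the inequality holds for every $\lambda>0$ with a finite constant and a nonzero left-hand side we must have $\sum_j p_jd_j=d$. For the necessity of \eqref{bl2}, fix a subspace $V\subseteq\R^d$; for small $\delta>0$ let $f_j$ be the indicator of a fixed ball in $\R^{d_j}$ intersected with the slab $\{y:|P_{(L_jV)^\perp}y|\le\delta\}$, so that $\int f_j\sim\delta^{\,d_j-\dim L_jV}$. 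Since $L_j(P_Vx)\in L_jV$, one has $L_jx$ inside this slab for every $x$ lying in a $\delta$-neighbourhood (in the $V^\perp$ directions) of a fixed ball in $V$; hence the left-hand side of \eqref{BL} is $\gtrsim\delta^{\,d-\dim V}$, while by \eqref{bl1} the right-hand side is $\sim\delta^{\,d-\sum_j p_j\dim L_jV}$. Letting $\delta\to0$ forces $\dim V\le\sum_j p_j\dim L_jV$. Taking here $V=\bigcap_j\ker L_j$ shows in addition that $\bigcap_j\ker L_j=\{0\}$, so under the stated conditions the datum is automatically non-degenerate.

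For the \emph{sufficiency} I would induct on $d$. The base case $d=1$ is Hölder's inequality, with \eqref{bl1} (all $d_j=1$) providing the exponent relation $\sum_j p_j=1$. For the inductive step I would split into cases according to whether there is a proper \emph{critical} subspace, i.e.\ a subspace $V$ with $\{0\}\subsetneq V\subsetneq\R^d$ achieving equality in \eqref{bl2}. If one exists, put $W_j=L_jV$ and pass to the restrictions $L_j'\colon V\to W_j$ and to the induced quotient maps $L_j''\colon\R^d/V\to\R^{d_j}/W_j$. Criticality of $V$ is precisely what splits the scaling identity into $\dim V=\sum_j p_j\dim W_j$ and $d-\dim V=\sum_j p_j(d_j-\dim W_j)$, and one checks that \eqref{bl2} is inherited by both sub-data $(L_j')$ and $(L_j'')$. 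A Fubini-type argument---integrate \eqref{BL} first over the $V$-fibres, applying the inequality for $(L_j')$, and then over a complement, applying the inequality for $(L_j'')$---then yields $\BL(\bL,\mathbf p)\le\BL(\bL',\mathbf p)\,\BL(\bL'',\mathbf p)$, finite by the induction hypothesis since $\dim V,\dim(\R^d/V)<d$.

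This leaves the \emph{simple} case, in which $\{0\}$ and $\R^d$ are the only critical subspaces. Here I would invoke Lieb's theorem that Gaussian inputs exhaust the Brascamp--Lieb constant, which reduces finiteness of $\BL(\bL,\mathbf p)$ to finiteness of
\[
\sup\;\frac{\prod_{j=1}^m(\det A_j)^{p_j}}{\det\big(\sum_{j=1}^m p_j L_j^{*}A_jL_j\big)}
\]
over positive-definite symmetric matrices $A_j$ on $\R^{d_j}$; by \eqref{bl1} the ratio is invariant under the common rescaling $A_j\mapsto cA_j$, so one may normalise $\prod_j(\det A_j)^{p_j}=1$ and must then bound $\det\big(\sum_j p_j L_j^{*}A_jL_j\big)$ from below. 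If this determinant tended to $0$ along some sequence of normalised data, then writing $V$ for the span of the bottom eigendirections of $\sum_j p_j L_j^{*}A_jL_j$ one finds $L_jv\to0$ for $v\in V$ whenever $A_j$ stays bounded, while the normalisation forces the remaining $A_j$ to degenerate; a short argument then produces $\{0\}\subsetneq V\subsetneq\R^d$ with $\dim V>\sum_j p_j\dim L_jV$, contradicting simplicity. Hence the supremum is finite. Equivalently, the simple case can be handled by a heat-flow monotonicity argument in the spirit of Carlen--Lieb--Loss, evolving each $f_j$ under the heat semigroup and comparing the functional at times $0$ and $\infty$.

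I expect the simple case to be the genuine obstacle: making simplicity into a quantitative non-degeneracy statement for the Gaussian parameters requires a careful spectral analysis of the pencil $\sum_j p_j L_j^{*}A_jL_j$ and a non-trivial use of the strict dimension inequality for every proper nonzero $V$. By contrast, the factorisation step is largely bookkeeping once one recognises that criticality of $V$ is exactly the condition under which both \eqref{bl1} and \eqref{bl2} descend simultaneously to $V$ and to the quotient $\R^d/V$.
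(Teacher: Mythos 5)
This theorem is stated in the paper as a quoted result of Bennett--Carbery--Christ--Tao \cite{BCCT}; the paper offers no proof of it, so there is no internal argument to compare yours against. Judged on its own terms, your sketch correctly reproduces the strategy of \cite{BCCT}. The necessity half is essentially complete: the anisotropic rescaling gives \eqref{bl1}, and the slab construction gives \eqref{bl2} (it is legitimate that the exponent count there uses \eqref{bl1}, since both conditions are being shown necessary simultaneously). The sufficiency half is a correct outline, but two of its steps are genuinely substantial and are presented more lightly than they deserve. First, Lieb's theorem that Gaussian inputs exhaust the Brascamp--Lieb constant is itself a deep result, not a reduction one gets for free; the heat-flow alternative you mention also requires the critical-subspace factorisation in the non-simple case. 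Second, in the simple case, the passage from ``no proper critical subspace'' to a uniform lower bound on $\det\big(\sum_j p_j L_j^{*}A_jL_j\big)$ over normalised Gaussian data is the technical core of \cite{BCCT}: along a degenerating sequence the bottom eigenspaces need not converge and the $A_j$ may degenerate at different rates in different directions, so extracting the violating subspace $V$ requires a compactness argument on the Grassmannian rather than a ``short argument.'' The factorisation through a critical $V$ is, as you say, mostly bookkeeping once one notes that for $V\subseteq U$ one has $\dim L_j''\bar U=\dim L_jU-\dim L_jV$ and that criticality is exactly what makes \eqref{bl1} descend to both $V$ and $\R^d/V$. In short: this is a faithful skeleton of the published proof, with its hardest bones outsourced to Lieb's theorem and to the compactness analysis of the simple case.
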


We denote $(\mathbf{L}, \mathbf{p})$ by the collection of $\lbrace L_j \rbrace_{1\leq j \leq m}$ and $\lbrace p_j \rbrace_{1\leq j \leq m}$. Also, we denote $\BL(\mathbf{L}, \mathbf{p})$ by the smallest constant $C$ for which  \eqref{BL} holds for all input data $f_1, \dots, f_m$.

To prove theorem \ref{MLR}, we need the following generalization of multilinear Kakeya estimate, which can be viewed as perturbation of Brascamp-Lieb inequality.

\begin{theorem}[\cite{BBFL, Z}]
\label{MLK}
Suppose $(\mathbf{L}, \mathbf{p})$ is a Brascamp-Lieb datum for which $\BL (\mathbf{L},\mathbf{p})<\infty$, $L_j : \R^d \rightarrow \R^{d_j}$ for each $j$. Then there exists $\nu >0$ such that, for every $\epsilon>0$,
\begin{equation}
\label{multi-kakeya}
\int_{\left[-1,1\right]^d}\prod_{j=1}^m (\sum_{T_j \in \mathbb{T}_j}\chi_{T_j})^{p_j}\leq C\delta^{d}\prod_{j=1}^m (\#\mathbb{T}_j)^{p_j}
\end{equation}
holds for all finite collections $\mathbb{T}_j$ of $\delta$-neighborhoods of $(d-d_{j})$-dimensional affine subspaces of $\mathbb{R}^d$ which, modulo translation, are within a distance $\nu$ of the fixed subspace $V_j \coloneqq ker L_j$.
\end{theorem}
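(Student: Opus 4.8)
The plan is to obtain \eqref{multi-kakeya} from the exact Brascamp--Lieb inequality \eqref{BL}: we first handle the degenerate configuration in which each family $\mathbb T_j$ consists of genuine translates of $V_j=\ker L_j$, and then upgrade to the perturbed, nearly parallel setting. This is the route of \cite{BBFL, Z}, which we would follow.

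In the degenerate case one identifies $L_j$ with a linear isomorphism of $V_j^{\perp}$ onto $\R^{d_j}$, so that a $\delta$-neighbourhood $T_j$ of a translate of $V_j$ is comparable to a set $\{x:\ |L_j x-a_{T_j}|\le \delta\}$; hence $\chi_{T_j}\le g_{T_j}\circ L_j$ with $g_{T_j}$ the indicator of an $O(\delta)$-ball in $\R^{d_j}$ and $\int_{\R^{d_j}}g_{T_j}\lesssim\delta^{d_j}$. Putting $f_j:=\sum_{T_j\in\mathbb T_j}g_{T_j}$ one has $\sum_{T_j\in\mathbb T_j}\chi_{T_j}\le f_j\circ L_j$ and $\int f_j\lesssim(\#\mathbb T_j)\,\delta^{d_j}$, so \eqref{BL} together with the scaling identity \eqref{bl1} gives
\[
\int_{[-1,1]^d}\prod_{j=1}^m\Big(\sum_{T_j\in\mathbb T_j}\chi_{T_j}\Big)^{p_j}
\le \BL(\mathbf{L},\mathbf{p})\prod_{j=1}^m\Big(\int_{\R^{d_j}}f_j\Big)^{p_j}
\lesssim \delta^{\sum_j p_j d_j}\prod_{j=1}^m(\#\mathbb T_j)^{p_j}
=\delta^{d}\prod_{j=1}^m(\#\mathbb T_j)^{p_j},
\]
which is \eqref{multi-kakeya} in this case, with $C$ comparable to $\BL(\mathbf{L},\mathbf{p})$.

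The transfer to nearly parallel families rests on two ingredients, both of which we would quote from the literature. The first is the quantitative stability of the Brascamp--Lieb constant established in \cite{BBFL}: since $\BL(\mathbf{L},\mathbf{p})<\infty$, there are $\nu_0>0$ and $c_0<\infty$, depending only on $(\mathbf{L},\mathbf{p})$, such that $\BL(\mathbf{L}',\mathbf{p})\le c_0\,\BL(\mathbf{L},\mathbf{p})$ for every datum $\mathbf{L}'=(L_1',\dots,L_m')$ whose kernels lie within $\nu_0$ of $V_j$ for all $j$; in other words, \eqref{BL} is stable under small perturbations of the linear maps. This is proved via Lieb's theorem, which reduces $\BL(\mathbf{L},\mathbf{p})$ to a supremum of an explicit ratio of determinants over positive-definite forms, followed by an induction over the lattice of critical subspaces of the datum, the crux being that \eqref{bl2} is a quantitatively open condition. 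The second ingredient is an induction on scales in the spirit of Bennett--Carbery--Tao \cite{BCT} and of Guth's polynomial partitioning: tile $[-1,1]^d$ by cubes $Q$ of a suitable side $\rho\ll1$, rescale each $Q$ to the unit cube by an affine map, which fixes all tube directions and turns the tubes of $\mathbb T_j$ meeting $Q$ into $(\delta/\rho)$-tubes still lying within $\nu$ of $V_j$, apply the estimate at the coarser scale $\delta/\rho$ on each $Q$, and reassemble over the cubes; the stability constant $c_0$ together with the degenerate bound above furnishes the uniform control needed to feed the reassembly back into the estimate at a fixed scale, and iterating the resulting recursion yields \eqref{multi-kakeya} with $C=C(\mathbf{L},\mathbf{p})$ for every $\nu\le\nu_0$.

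The difficulty is concentrated in these two quoted inputs rather than in the assembly. Conceptually, the decisive new point is the stability of $\BL(\mathbf{L},\mathbf{p})$: a priori $\mathbf{L}'\mapsto\BL(\mathbf{L}',\mathbf{p})$ could jump from finite to infinite under an arbitrarily small perturbation, and ruling this out with explicit $\nu_0$ and $c_0$ is the technical heart of \cite{BBFL}. Technically, obtaining the \emph{endpoint} form of \eqref{multi-kakeya} with the clean power $\delta^{d}$, rather than the $\delta^{d-\epsilon}$ that a straightforward induction on scales produces, is where the polynomial partitioning argument of \cite{Z} (adapting Guth's proof of the sharp multilinear Kakeya inequality) is required; since an $\epsilon$-loss is harmless for the restriction estimates proved in this paper, either version would in fact suffice for our purposes.
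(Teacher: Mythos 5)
The paper does not actually prove Theorem \ref{MLK} --- it is quoted from \cite{BBFL, Z} --- but your outline correctly reproduces the strategy of those references and in fact coincides with the argument the paper itself gives for its own variant: Lemma \ref{test} handles the exactly-parallel case via the Brascamp--Lieb inequality together with the scaling identity \eqref{bl1} and the stability of the Brascamp--Lieb constant, and Lemma \ref{induction} is precisely the Guth-style induction on scales you describe. The only caveats are cosmetic: Guth's short proof of multilinear Kakeya is an induction on scales rather than polynomial partitioning (the polynomial method enters in \cite{Z} to obtain the clean power $\delta^{d}$), and, as you correctly note, the $\delta^{d-\epsilon}$ version already suffices for the applications in this paper.
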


The estimate  \eqref{multi-kakeya} was proved with the bound $C\delta^{d-\epsilon}$ in \cite{BBFL} 
and the $\delta^{-\epsilon}$ loss in the bound was removed later in \cite{Z}.

Let $\Sigma_j : \overline{U}_j\rightarrow \R^{d}$ be smooth parametrizations of $d_j$-dimensional submanifold $S_j$, where $U_j$ is a bounded open set in $\R^{d_j}$. Then, the associated extension operator is defined by 
$$E_j g(\xi)=\int_{U_j}e^{i\xi \cdot \Sigma_j (u)}g(u)du, \qquad \xi\in \R^{d}.$$
The following is an easy consequence of \cite[Theorem 1.3]{BBFL}. 

\begin{theorem}\label{MLR}
Let $\mathbf a=\{a_j\}_{1\le j\le m}$, $a_j\in \mathbb R^{d_j}$ and 
$\mathbf L(\mathbf a)=\{ d\Sigma_{j} (a_j))^*\}_{1\le j\le m}$. 
Suppose that $\BL (\mathbf{L}(\mathbf a),\mathbf{p})\le C_0$ for all $\mathbf a\in \overline{U}_1\times\dots\times \overline{U}_m$ and some constant $C_0$. Then, for every $\epsilon>0$, there exists a constance 
$C=C(\epsilon)$ such that 
\begin{equation}
\label{local}
\int_{B(0,R)}\prod_{j=1}^m \vert E_j g_j \vert ^{2p_j}\leq C R^{\epsilon}\prod_{j=1}^m \Vert g_j \Vert_{L^2 (U_j )}^{2p_j}
\end{equation}
holds for all $g_j \in L^2(U_j)$, $1\leq j \leq m$, and all $R\geq 1$.
\end{theorem}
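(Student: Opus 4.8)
The plan is to deduce Theorem \ref{MLR} from the multilinear Kakeya inequality (Theorem \ref{MLK}) by a wave-packet decomposition, i.e.\ to spell out the passage from multilinear Kakeya to multilinear restriction of \cite{BCT} in the higher-codimension setting; equivalently, one may simply quote the nonlinear Brascamp--Lieb inequality \cite[Theorem 1.3]{BBFL} and check that its hypothesis holds. The structural point is that the kernels $V_j=\ker L_j$ that govern the transversality in Theorem \ref{MLK} are, for $L_j=(d\Sigma_j(a_j))^*$, exactly the normal spaces $(\operatorname{Im} d\Sigma_j(a_j))^\perp$ of $S_j$ at $\Sigma_j(a_j)$, and these are precisely the directions in which the wave packets of $E_j$ at scale $R$ are elongated; thus the assumption $\BL(\mathbf L(\mathbf a),\mathbf p)\le C_0$ \emph{along the surfaces} is exactly the input one needs to run the Kakeya estimate.

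First I would perform, for each $j$, a wave-packet decomposition of $E_jg_j$ at scale $R$: partition $U_j$ into finitely overlapping caps $\theta$ of radius $R^{-1/2}$, write $g_j=\sum_\theta g_{j,\theta}$, and use that $\Sigma_j(\theta)$ lies within $O(R^{-1})$ of the affine tangent plane at the center $u_\theta$. This yields the locally constant property: up to rapidly decaying tails, $|E_jg_{j,\theta}|$ is constant on each translate of a plank of dimensions $R^{1/2}$ in the $d_j$ tangent directions and $R$ in the $d-d_j$ normal directions, the long $(d-d_j)$ directions spanning $(\operatorname{Im} d\Sigma_j(u_\theta))^\perp$. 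Collecting these planks over all $\theta$ into a family $\mathbb{T}_j$, and dyadically pigeonholing the amplitudes so that the surviving packets of $E_jg_j$ have comparable height $\lambda_j$, one gets the pointwise bound $|E_jg_j|^2\lesssim\lambda_j^2\sum_{T\in\mathbb{T}_j}\chi_T^\ast$ together with the $L^2$ information $\lambda_j^2\,|T|\,\#\mathbb{T}_j\lesssim\|E_jg_j\|_{L^2(B(0,R))}^2\lesssim R^{\,d-d_j}\|g_j\|_{L^2(U_j)}^2$ (Plancherel on the disjoint planks attached to a fixed cap, plus almost-orthogonality of distinct caps on $B(0,R)$). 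Substituting into the left side of \eqref{local} reduces matters to the geometric estimate $\int_{B(0,R)}\prod_j\big(\sum_{T\in\mathbb{T}_j}\chi_T\big)^{p_j}\lesssim R^{d/2}\prod_j(\#\mathbb{T}_j)^{p_j}$, which after rescaling $\xi\mapsto R^{-1}\xi$ is precisely Theorem \ref{MLK} with $\delta=R^{-1/2}$; the remaining $R^{\epsilon}$ absorbs the sum over $O(\log R)$ dyadic amplitudes and the Schwartz-tail truncations (and the $\delta^{-\epsilon}$ loss, if one uses the \cite{BBFL} form of Theorem \ref{MLK} rather than the sharp version of \cite{Z}).

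To apply Theorem \ref{MLK} one needs the planks in each $\mathbb{T}_j$ to be, modulo translation, within the distance $\nu$ of a \emph{fixed} $(d-d_j)$-plane, whereas our normal directions vary along $S_j$. I would therefore partition each $\overline{U}_j$ into $N$ subcaps on which the Gauss map $a\mapsto(\operatorname{Im} d\Sigma_j(a))^\perp$ oscillates by at most $\nu$; since $\Sigma_j\in C^2(\overline{U}_j)$ and $\overline{U}_j$ is compact, $N$ depends only on $\nu$ and $\|\Sigma_j\|_{C^2}$, not on $R$. Here $\nu$ is the transversality radius of Theorem \ref{MLK}, which, as its proof in \cite{BBFL} shows, may be taken to depend only on $d$, the $d_j$, $\mathbf p$ and the upper bound $C_0$ for the Brascamp--Lieb constant — or, to the same effect, one invokes the stability (continuity) of the Brascamp--Lieb constant to obtain a single $\nu$ valid for all the data that occur. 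Writing $|E_jg_j|\le\sum_{\ell=1}^{N}|E_j(g_j\mathbf 1_{U_j^{\ell}})|$ and expanding the product, \eqref{local} is dominated by the sum over the $N^m$ tuples $(\ell_1,\dots,\ell_m)$ of $\int_{B(0,R)}\prod_j|E_j(g_j\mathbf 1_{U_j^{\ell_j}})|^{2p_j}$; each such tuple is a legitimate Brascamp--Lieb datum with constant $\le C_0$ by hypothesis, so the previous paragraph applies to it, and summing the $O(1)$ contributions gives \eqref{local}.

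The genuinely non-routine points — hence what I expect to be the main obstacle in a careful write-up — are the localize-and-recombine step for the varying normal directions (exactly where the \emph{uniformity} of the hypothesis $\BL(\mathbf L(\mathbf a),\mathbf p)\le C_0$ over $\overline{U}_1\times\cdots\times\overline{U}_m$ is indispensable, since otherwise the number of tuples could not be controlled independently of $R$) and the rigorous treatment of the locally constant property, i.e.\ replacing $\chi_T$ by admissible rapidly decaying weights and controlling their overlaps. Both are by now standard, and in fact the whole argument is already packaged in \cite[Theorem 1.3]{BBFL}: from that result Theorem \ref{MLR} follows with essentially no work once one recognizes that its hypothesis is exactly our assumption that the tangent data $\{(d\Sigma_j(a_j))^{\ast}\}_{j}$ has Brascamp--Lieb constant at most $C_0$ uniformly on $\overline{U}_1\times\cdots\times\overline{U}_m$.
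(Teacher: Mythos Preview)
Your proposal is correct and in fact subsumes the paper's treatment: the paper does not give its own proof of Theorem~\ref{MLR} at all, but simply records it as ``an easy consequence of \cite[Theorem~1.3]{BBFL}'', which is exactly what you say in your final paragraph. Your additional sketch of the wave-packet/Kakeya mechanism behind that citation goes beyond what the paper provides; it is the standard route and is accurate in outline, though in a full write-up you would want to make explicit the induction-on-scales step (as in \cite{BCT,BBFL}) rather than presenting it as a single application of Theorem~\ref{MLK}.
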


We only  consider complex hypersurfaces which are given by $\Sigma_j(z)=(z,\phi(z))$, $z\in U_j$, where $\phi$ is a holomorphic function. In other words, we consider the case  $d_j =2n-2$, $d=2n$, and regard $U_j$ as a subset of $\C^{n-1}$ rather than $\R^{2n-2}$, and  $\C^n$ replaces $\R^{2n}$ via the obvious identification. 
It is plausible to expect  that $R^{\epsilon}$ at the right hand side of \eqref{local} is removable. But this is known only  for some special cases and the problem is left open in most of cases. $R^{\epsilon}$ can be replaced by $(logR)^{\kappa}$ for a suitable constant $\kappa$, see \cite{Z}.  

\begin{lemma}\label{decay_est} Let $\phi:\mathbb C^{n-1}\to \mathbb C$ be a holomorphic function on the support of $\chi\in C_c^\infty(\mathbb C^{n-1})$. Suppose $\det H\phi\neq 0$ on the support of $\chi$, then 
\[ \Big| \int e^{i w\odot (z,\phi(z))} \chi(z) dz\Big|\lesssim |w_n|^{n-1}.\]
\end{lemma}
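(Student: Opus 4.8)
We prove the bound with the exponent $|w_n|^{-(n-1)}$ (the statement should carry a minus sign, in agreement with the estimate $|\mathcal E^\phi_{\C^{n-1}}(\chi)(w)|\lesssim |w_n|^{-(n-1)}$ recorded before the lemma). The plan is a stationary phase argument whose heart is the computation of the real Hessian of the phase. Write $w=(w',w_n)$, $w'=(w_1,\dots,w_{n-1})$, and identify $z=(x,y)\in\R^{2n-2}$ via $z_j=x_j+iy_j$. Using $\Re(a\overline b)=\Re(\overline a b)$,
\[ w\odot(z,\phi(z))=\Re\Big(\textstyle\sum_{j=1}^{n-1}w_j\overline{z_j}\Big)+\Re\big(\overline{w_n}\,\phi(z)\big)=\ell_{w'}(z)+\Re\big(\overline{w_n}\,\phi(z)\big),\]
with $\ell_{w'}$ an $\R$-linear function of $z$. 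Hence the real Hessian of the phase equals that of $z\mapsto\Re(\overline{w_n}\phi(z))$; in particular it does not involve $w'$.

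To compute it, set $h:=\overline{w_n}\phi$, holomorphic on a neighborhood of $\supp\chi$. The Cauchy--Riemann equations give $\partial_{x_j}h=\partial_{z_j}h$ and $\partial_{y_j}h=i\,\partial_{z_j}h$; applying these identities twice and taking real parts, one finds that in $(x,y)$ block form the real Hessian of $\Re h$ is
\[ \mathcal H=\begin{pmatrix} P & -Q\\ -Q & -P\end{pmatrix},\qquad P:=\Re\!\big(\overline{w_n}H\phi(z)\big),\quad Q:=\Im\!\big(\overline{w_n}H\phi(z)\big),\]
which are real symmetric since $H\phi$ is symmetric. Since $\mathcal H=\mathrm{diag}(I,-I)\left(\begin{smallmatrix} P&-Q\\ Q&P\end{smallmatrix}\right)$ and $\det_{\R}\left(\begin{smallmatrix} P&-Q\\ Q&P\end{smallmatrix}\right)=\big|\det_{\C}(P+iQ)\big|^2$, we obtain
\[ |\det\mathcal H|=\big|\det\big(\overline{w_n}\,H\phi(z)\big)\big|^2=|w_n|^{\,2(n-1)}\,|\det H\phi(z)|^2\ \gtrsim\ |w_n|^{\,2(n-1)}\quad(z\in\supp\chi),\]
because $\det H\phi$ is continuous and nonvanishing on the compact set $\supp\chi$.

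Finally, one converts this Hessian lower bound into the decay estimate; the only point requiring care is uniformity in $w'$, since the conclusion depends on $|w_n|$ alone. We may assume $|w_n|\ge1$, as otherwise the integral is $\le\|\chi\|_{L^1}\lesssim|w_n|^{-(n-1)}$. Put $A_0:=\|\nabla\phi\|_{L^\infty(\supp\chi)}$. If $|w'|>2A_0|w_n|$, then on $\supp\chi$ the gradient of the phase---which in complex notation is $w'+w_n\overline{\nabla\phi}$---has modulus at least $|w'|-A_0|w_n|\ge\tfrac12|w'|$, while its derivatives of order $\ge2$ are $O(|w_n|)=O(|w'|)$; repeated non-stationary integration by parts then yields decay $\lesssim|w'|^{-(n-1)}\le|w_n|^{-(n-1)}$. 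If instead $|w'|\le2A_0|w_n|$, write the phase as $|w_n|\Psi_w$ with $\Psi_w=|w_n|^{-1}\ell_{w'}+\Re(e^{i\theta}\phi)$ and $\overline{w_n}=|w_n|e^{i\theta}$; then all derivatives of $\Psi_w$ up to a fixed finite order are bounded uniformly in $w$ on $\supp\chi$, and $|\det\mathrm{Hess}\,\Psi_w|\gtrsim1$ there, so the standard stationary phase estimate in $\R^{2n-2}$ gives $|\int e^{\,i|w_n|\Psi_w}\chi|\lesssim|w_n|^{-(2n-2)/2}=|w_n|^{-(n-1)}$. Combining the two regimes completes the proof. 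The main work is the Hessian identity above; the technical subtlety---and the reason a naive ``fixed phase times large parameter'' invocation of stationary phase does not suffice---is precisely that when $|w'|\gg|w_n|$ the linear term of the phase dominates the genuinely oscillatory part, which is what the case split handles.
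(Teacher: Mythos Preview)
Your proof is correct and follows essentially the same route as the paper: both arguments reduce to the stationary phase method after computing the real $(2n-2)\times(2n-2)$ Hessian of the phase via the Cauchy--Riemann equations and the block identity $\bigl|\det\bigl(\begin{smallmatrix}P&-Q\\Q&P\end{smallmatrix}\bigr)\bigr|=|\det(P+iQ)|^2$, arriving at $|\det\mathcal H|=|w_n|^{2(n-1)}|\det H\phi|^2$. Your treatment of the uniformity in $w'$ (the case split $|w'|\gtrless 2A_0|w_n|$, with non-stationary integration by parts in the first regime) is in fact more explicit than the paper's, which simply normalizes the phase by $|w_n|$ and invokes ``the standard stationary phase method'' without spelling out why the estimate is uniform when $|w'|/|w_n|$ is large; your version fills that gap.
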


\begin{proof}
Let us write $w_n=s+it$, and $z=x+iy$, $x,y\in \mathbb R^{n-1}$.   
For simplicity let us set \[(\rp(x,y), \ip(x,y)):=(\Re\phi (x+iy), \Im \phi(x+iy)).\]  By the stationary phase method, it is sufficient to show that the determinant of 
the hessian matrix of $s\rp (x,y)+t\ip(x,y)$ is comparable to $|(s,t)|^{2n-2}$. That is to say, 
\begin{equation}
\label{det}  
\bigg|\det \begin{pmatrix}   s\rp_{xx}''+t\ip_{xx}'' &  s\rp_{xy}''+t\ip_{xy}'' \\
s\rp_{xy}''+t\ip_{xy}  &   s\rp_{yy}''+t\ip_{yy}''
\end{pmatrix}\bigg|=(s^2+t^2)^{(n-1)} |\det H\phi(z)|^2.
\end{equation}
Here $\rp_{xx}''$ denotes the matrix $(\partial_{x_i}\partial_{x_j}\rp)$, and  similarly $\rp_{xy}''$, $\rp_{yy}''$ also denote the matrices  $(\partial_{x_i}\partial_{y_j}\rp)$, $(\partial_{y_i}\partial_{y_j}\rp)$, respectively.
In fact, we may write the phase function 
$w\odot (z,\phi(z))=|w_n|  \frac{w}{|w_n|}\odot (z,\phi(z))$. Then \eqref{det} shows 
the determinant of the hessian matrix of $\frac{w}{|w_n|}\odot (z,\phi(z))$ as a function of $x,y$ is bounded away from zero. 
Thus the standard  stationary phase method gives the desired estimate.

To see \eqref{det} note that  $\nabla_z\phi=\rp_x'+i\ip_x'$, and    
$     \det H\phi=  \det(\rp_{xx}''+i\ip_{xx}'') $  since $\phi$ is holomorphic. 
  Meanwhile,  $
(s\mathrm{I}-it\mathrm {I})H\phi=(s\rp_{xx}'' +t\ip_{xx}'')+i(s\ip_{xx}''-t\rp_{xx}'').
$
Hence, using \eqref{elementary}, 
\[   
\Big|\det((s\mathrm {I}-it\mathrm{I})H\phi\Big|^2=
\det \begin{pmatrix}   s\rp_{xx}''+t\ip_{xx}'' &  s\ip_{xx}''-t\rp_{xx}''\\
t\rp_{xx}'' -s\ip_{xx}'' &   s\rp_{xx}'' +t\ip_{xx}'' 
\end{pmatrix}.\] 
By the Cauchy-Riemann equation it follows that  $\ip_{xx}''=-\ip_{yy}'',$ $\rp_{xx}''=-\rp_{yy}''$, $\ip_{xx}''=-\rp_{xy}''$, and $\rp_{xx}''=\ip_{xy}''.$\footnote{$\rp_x'=\ip_y'$, $\rp_y'=-\ip_x'$.}   Thus, 
the right hand side is equal to
\[  \det \begin{pmatrix}   s\rp_{xx}''+t\ip_{xx}'' &  -s\rp_{xy}''- t\ip_{xy}''\\
s\rp_{xy}''+t\ip_{xy}'' &   -s\rp_{yy}''-t\ip_{yy}''
\end{pmatrix}.\] 
Therefore \eqref{det} follows. 
\end{proof}

\section{Multilinear restriction estimates for complex hypersurfaces}
Even though we have quite  general mulitilinear restriction estimates in Theorem \ref{MLR}, applying those estimates to particular cases is another matter. 
We need to reformulate those estimates in favorable forms which  suit for deducing linear restriction estimates for the complex surfaces.  
This is the place where the assumption that the function $\phi$ is holomorphic plays a role. The assumption significantly simplifies the description of the conditions which 
guarantee mutilinear estimates. 
However, this is not enough for our purpose since we also need general $k$-linear estimates with $k<n$.  
These estimate can not be directly deduced from the $n$-linear estimates. In particular the condition \eqref{bl1} is not generally satisfied by 
these k-linear estimates. Nevertheless, the difficulty can be easily overcome by simple projection argument  and the induction argument due to  Guth \cite{G2}. 

\subsection{$n$-linear restriction estimate}
For a point $a\in \mathbb C^{n-1}$ we set 
$$
n(\phi,a)=\left(\overline{\partial_{1} \phi}(a),\overline{\partial_{2} \phi}(a),\cdots,\overline{\partial_{{n-1}} \phi}(a),-1\right)\in \mathbb C^n,
$$
where $\partial_j=\partial_{z_j}$ is the complex derivative. This is the normal vector to the surface at $(a_1, a_2,\cdots ,a_{n-1}, \phi(a))$ with respect to the usual Hermitian inner product on $\C^n$. We also set 
 $$\mathbf n^\phi(a)=\frac{n(\phi,a)}{|n(\phi,a))|}.$$  
We will see that the complex line(real plane) generated by $n^{\phi}$ is normal to the graph of $\phi$ which has codimension 2.
 The following is a  consequence of Theorem \ref{MLR}.

\begin{theorem}\label{mlrcn}
Let $U\subset \mathbb C^{n-1}$ be a bounded open set and  $U_j \subset U$, $ 1\leq j \leq n$. Suppose $\phi: \overline U \rightarrow \C$ is a holomorphic function and 
\begin{equation}
\label{normal}
\left|\det\begin{pmatrix}
\nphi(a_1),  \nphi (a_2), \cdots, 
\nphi(a_n)
\end{pmatrix}\right|>c,   \quad \forall a_i\in U_i,  \, i=1, \dots, n
\end{equation}
for some $c>0$. 
Then, $\forall \epsilon>0, \forall R\geq 1$, there is  a constant $C=C(\epsilon)$ such that 
\[\Big\|\prod_{j=1}^n  \mathcal E^\phi_{U_j} g_j \Big\|_{L^{\frac{2}{n-1}}(B(0,R))}\leq C_{\epsilon}R^{\epsilon}\prod_{j=1}^n \Vert g_j \Vert_{L^2 (U_j )}.\]
\end{theorem}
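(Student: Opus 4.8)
The plan is to deduce Theorem~\ref{mlrcn} directly from the general multilinear restriction estimate of Theorem~\ref{MLR}, so that the entire problem reduces to verifying its hypothesis: that the attached Brascamp--Lieb datum has \emph{uniformly} bounded Brascamp--Lieb constant. Identifying $\C^{n-1}$ with $\R^{2n-2}$ and $\C^{n}$ with $\R^{2n}$, we take $m=n$, $d_j=2n-2$, $d=2n$, $\Sigma_j(z)=(z,\phi(z))$, and the given $U_j$; under this identification $w\odot(z,\phi(z))$ is exactly the real inner product of $w$ and $(z,\phi(z))$, so $E_j=\mathcal E^\phi_{U_j}$. The natural exponents are $p_j=\tfrac1{n-1}$ for every $j$, so that $2p_j=\tfrac2{n-1}$: then the left side of \eqref{local} is $\big\|\prod_{j}\mathcal E^\phi_{U_j}g_j\big\|_{L^{2/(n-1)}(B(0,R))}^{2/(n-1)}$ and the right side is the matching power of the desired bound, so once Theorem~\ref{MLR} applies we conclude by raising the inequality to the power $\tfrac{n-1}{2}$ and relabelling $\epsilon$.

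It remains to check $\BL(\mathbf L(\mathbf a),\mathbf p)\le C_0$ for all $\mathbf a\in\overline{U}_1\times\cdots\times\overline{U}_n$, where $L_j=(d\Sigma_j(a_j))^*:\R^{2n}\to\R^{2n-2}$. Since $d\Sigma_j(a_j)$ is injective, $L_j$ is onto and $\ker L_j$ is the $\odot$-orthogonal complement of the tangent plane to the graph at $(a_j,\phi(a_j))$; because $\phi$ is holomorphic this tangent plane is a complex hyperplane, and a short Hermitian computation (or the remark preceding the theorem) identifies its $\odot$-orthogonal complement with the complex line $\mathbb{C}\,\nphi(a_j)$, a real $2$-plane. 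Condition \eqref{bl1} is then the arithmetic identity $\sum_j p_jd_j=n\cdot\frac{2n-2}{n-1}=2n=d$, valid independently of $\mathbf a$. Using $\dim(L_jV)=\dim V-\dim(V\cap\ker L_j)$, condition \eqref{bl2} is equivalent to
\begin{equation*}
\sum_{j=1}^n\dim\!\big(V\cap\mathbb{C}\,\nphi(a_j)\big)\le\dim V\qquad\text{for every subspace }V\subseteq\R^{2n}.
\end{equation*}

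The heart of the matter, and the step I expect to be the main obstacle, is proving this last inequality from the transversality hypothesis \eqref{normal}, which asserts precisely that $\nphi(a_1),\dots,\nphi(a_n)$ are linearly independent over $\C$. Writing $v_j=\nphi(a_j)$, one partitions $\{1,\dots,n\}$ according to whether $\mathbb{C}v_j\subseteq V$, or $\dim(V\cap\mathbb{C}v_j)=1$, or $V\cap\mathbb{C}v_j=0$: in the first case $V$ contains the real $2$-plane $\R v_j\oplus\R(iv_j)$, in the second a real line $\R(c_jv_j)$ with $c_j\in\C^\times$. The elementary observation that a $\C$-linearly independent family stays $\R$-linearly independent after each member is either kept (rescaled by a nonzero complex number) or replaced by the pair consisting of itself and $i$ times itself then shows that all of these lines and planes sit $\R$-independently inside $V$; summing dimensions gives the inequality. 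This is exactly where \emph{complex}, rather than merely real, transversality of the normals is used, in contrast with the hypersurface case where the normals are lines. Finally, for uniformity: \eqref{bl1} holds identically and the above shows \eqref{bl2} holds at every point of the compact set $\overline{U}_1\times\cdots\times\overline{U}_n$ under \eqref{normal}, so $\BL(\mathbf L(\mathbf a),\mathbf p)$ is finite throughout; by the local boundedness of the Brascamp--Lieb constant on its locus of finiteness \cite{BCCT}, continuity of $\mathbf a\mapsto\mathbf L(\mathbf a)$, and compactness, it is bounded there, supplying $C_0$ and completing the reduction to Theorem~\ref{MLR}. (Alternatively, one tracks the dependence of $\BL$ on the constant $c$ in \eqref{normal} and on $\|\phi\|_{C^1(\cD)}$ directly.)
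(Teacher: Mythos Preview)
Your proposal is correct and follows essentially the same approach as the paper: reduce to Theorem~\ref{MLR}, identify $\ker L_j$ with the complex line $\mathbb{C}\,\nphi(a_j)$, and deduce the dimension condition \eqref{bl2} from complex linear independence of the normals. The only cosmetic difference is that the paper carries out the identification of $\ker L_j$ by writing down explicit real generators $v_{1,j},v_{2,j}$ and verifying via the identity $\det\!\begin{pmatrix}B&D\\-D&B\end{pmatrix}=|\det(B+iD)|^2$ that these $2n$ vectors form a basis of $\R^{2n}$, whereas you argue the $\C$-to-$\R$ independence directly; your treatment of the \emph{uniform} bound on $\BL(\mathbf L(\mathbf a),\mathbf p)$ via compactness and local boundedness is in fact more careful than the paper, which only checks finiteness pointwise.
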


\begin{proof}  We rephrase the condition in the real valued from. We write $z_j=x_j+iy_j$, 
\[\phi=\phi_1 + i\phi_2,\]
and set 
 $$\Sigma(x_1,y_1,x_2,\cdots,y_{n-1})=(x_1,\cdots,y_{n-1},\phi_1(x_1,\cdots,y_{n-1}),\phi_2(x_1,\cdots,y_{n-1})).$$ 
Under this identification  
$L_j = (d(\Sigma (a_j)))^*$ is given by 
$$ L_j=\left(\begin{array}{c|cc}
 & \frac{\partial\phi_1}{\partial x_1}(a_j) & \frac{\partial\phi_2}{\partial x_1}(a_j)\\
I_{2n-2} & \vdots  & \vdots\\
 & \frac{\partial \phi_1}{\partial y_{n-1}}(a_j) & \frac{\partial \phi_2}{\partial y_{n-1}}(a_j)
\end{array}\right).$$
Then, by Theorem \ref{MLR} it suffices to show that $\BL(\mathbf{L},\mathbf{p})<\infty$ for $\mathbf{L}=\lbrace L_j \rbrace_{1\leq j \leq n}$, $\mathbf{p}=\lbrace \frac{1}{n-1} \rbrace_{1\leq j \leq n}$.  That is to say,  $(\mathbf{L}, \mathbf{p})$ verifies \eqref{bl1} and \eqref{bl2}.   
The condition \eqref{bl1} is clearly satisfied. For the condition \ref{bl2}, we need to show 
\begin{equation}
\label{dimcon}
dimV\leq \sum_{j=1}^n \frac{1}{n-1} \dim(L_j V)\text{ for  any subspace }V\subseteq \R^{2n}.
\end{equation}
This follows from 
\begin{lemma}
\label{haha}
$\sum_{j=1}^n \dim(L_j V)=\sum_{j=1}^n (2n-\dim(ker(L_j \vline_V)))\geq 2n^2-\dim V$.
\end{lemma}

Indeed, by this lemma  \eqref{dimcon} is equivalent to $\dim V\le 2n$, which is trivially true.   
\end{proof}

\begin{proof}[Proof of Lemma \ref{haha}]
The first equality is obvious by the dimension theorem. For the second, it is enough to verify $$\dim V\geq \sum_{j=1}^n \dim(ker(L_j \vert_V))=\sum_{j=1}^n \dim(ker(L_j)\cap V).$$ 
The kernel space of $L_j$ is generated by two vectors 
\begin{align*}
v_{1,j}=\Big(\frac{\partial\phi_1}{\partial x_1}(a_j),\frac{\partial\phi_1}{\partial y_1}(a_j),\cdots,\frac{\partial\phi_1}{\partial x_{n-1}}(a_j), \frac{\partial\phi_1}{\partial y_{n-1}}(a_j),-1,0\Big), \\
v_{2,j}=\Big(\frac{\partial\phi_2}{\partial x_1}(a_j),\frac{\partial\phi_2}{\partial y_1}(a_j),\cdots,\frac{\partial\phi_2}{\partial x_{n-1}}(a_j), \frac{\partial \phi_2}{\partial y_{n-1}}(a_j),0,-1\Big).
\end{align*} since these two vectors are orthogonal to all row vectors of $L_j$. Since $\phi$ is holomorphic, by the Cauchy-Riemann equation it follows that 
\[v_{2,j}=\Big(-\frac{\partial\phi_1}{\partial y_1}(a_j),\frac{\partial\phi_1}{\partial x_1}(a_j),\cdots,-\frac{\partial \phi_1}{\partial y_{n-1}}(a_j),\frac{\partial\phi_1}{\partial x_{n-1}}(a_j), 0,-1\Big).\] 
Now we observe that 
\begin{equation}
\label{vmatrix}
|\det \begin{pmatrix}
v_{1,1}\cdots v_{1,n} \  v_{2,1}\cdots  v_{2,n}
\end{pmatrix} |
=\left| \det \begin{pmatrix}
n(\phi,a_1) \ n(\phi,a_2)  \cdots  n(\phi,a_n)
\end{pmatrix}\right|^2.
\end{equation}
Here we regard the vectors as column vectors. 
   Indeed, if we denote by $B$ the $n\times n$ matrix with the $j$-th column $(\frac{\partial\phi_1}{\partial x_1}(a_j),\cdots,\frac{\partial\phi_1}{\partial x_{n-1}}(a_j),-1)$, $j=1,\dots, n$ and by $D$  the $n\times n$ matrix with the $j$-th column $(\frac{\partial\phi_1}{\partial y_1}(a_j),\cdots,\frac{\partial\phi_1}{\partial y_{n-1}}(a_j),0)$. Then after rearrangement we note that 
$$
 |\det \begin{pmatrix}
v_{1,1}\cdots v_{1,n} \  v_{2,1}\cdots  v_{2,n}
\end{pmatrix} |=\det  \begin{pmatrix}  B & D \\ -D & B\end{pmatrix}.
$$
Now recall the elementary identity 
\begin{equation}\label{elementary}
  \det  \begin{pmatrix}  B & D \\ -D & B\end{pmatrix}=|\det(B+iD)|^2
  \end{equation}
  which is valid for any square matrix $B$ and $D$.  Note that $B+iD$ is equal to the matrix $(\overline{n(\phi,a_1)},  \ \overline{n(\phi,a_2)}  \cdots  \overline{n(\phi,a_n)})$. Thus \eqref{vmatrix} follows.
   
From \eqref{vmatrix} and  the condition \eqref{normal} it follows that  $\lbrace v_{1,1},v_{2,1},\cdots ,v_{1,n},v_{2,n} \rbrace$ is a basis of $\R^{2n}$. Consequently, we have the desired $\dim V\geq \sum_{j=1}^n \dim(ker(L_j)\cap V).
 $  
\end{proof}

\subsection{$k$-linear restriction estimate with $k<n$}
The above theorem is an $n$-linear restriction estimate while $n$ is the complex dimension of the ambient space. 
Unfortunately, except the case $n=2$ this type of  multilinear restriction estimate alone is not sufficient to deduce linear estimate, and  we also need multilinear estimates with intermediate multilinearity. However, 
these estimates are not straightforward from Theorem \ref{mlrcn}. In fact,  since the multilinear estimates in \cite{BBFL} were obtained under assumption that the Bracamp-Lieb inequality is finite, the expected estimates are subject to the scaling condition \eqref{bl1}, which is not satisfied with $k<n$. 
To get around this, instead of deducing the desired estimate from the existing estimate we directly prove them by adopting the strategy \cite{BBFL}  which was used for the proof of Theorem \ref{mlrcn}.   For this purpose  we first need to show  suitable multilinear Kakeya estimates associated with the complex surfaces.

\begin{definition} 
For $v=(a_1+ib_1, \cdots , a_n+ib_n)\in \mathbb C^{2n}$, set 
$\mathrm I(v)=(a_1,b_1, \cdots , a_n, b_n)$ $\in \mathbb R^{2n}$. 
For $v_1, \ldots, v_k\in \mathbb C^{2n}$, we define 
$$ 
\vert v_1\wedge v_2\wedge \cdots \wedge v_k\vert 
:= \big|\det \big(\mathrm I(v_1),  \mathrm I(iv_1), \cdots,  \mathrm I(v_k),  \mathrm I(iv_k), w_1, \cdots ,w_{2n-2k} \big)\big|.
$$
where  $\lbrace w_1, w_2, \cdots, w_l \rbrace$ is an orthonormal basis of the orthonormal complement of 
the subspace $\mathrm{span} \{\mathrm I(v_1),  \mathrm I(iv_1), \cdots,  \mathrm I(v_k),  \mathrm I(iv_k)\}$.
\end{definition}

Note that the definition does not depend on particular choices of bases  $w_1, w_2, \cdots, w_l$. 
Clearly, the value $\vert v_1\wedge v_2\wedge \cdots \wedge v_k\vert$ quantifies  degree of transversality between subspaces 
$\sspan\{v_i\}$ provided $|v_i|\sim 1$.  
Using this notion of transversality, we obtain multilinear restriction estimate with multiplicity smaller than $n$.

\begin{theorem}\label{mlrk}
Let $2\le k \le n-1$ be an integer. 
Let $U\subset \mathbb C^{n-1}$ be a bounded open set and  $U_j \subset U$, $ 1\leq j \leq k$. Suppose $\phi: \overline U \rightarrow \C$ is a holomorphic function and 
\begin{equation}
\label{normal}
\big| \nphi(a_1)\wedge \nphi (a_2)\wedge \cdots \wedge \nphi(a_k) \big|>c,   \quad \forall a_i\in U_i,  \, i=1, \dots, k
\end{equation}
for some $c>0$. 
Then, $\forall \epsilon>0, \forall R\geq 1$, there is  a constant $C=C(\epsilon)$ such that 
$$
 \Big\Vert \prod_{j=1}^k\mathcal E^\phi_{U_j} g_j\Big\Vert_{L^{\frac{2}{k-1}}(B(0,R))} \leq C_{\epsilon}R^{\epsilon}\prod_{j=1}^k \Vert g_j \Vert_{L^2 (U_j )}. 
$$
\end{theorem}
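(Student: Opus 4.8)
The plan is to mimic, for multiplicity $k<n$, the strategy of \cite{BBFL} that underlies Theorem \ref{mlrcn}: reduce the multilinear restriction estimate to a multilinear Kakeya estimate for planks adapted to the complex surfaces, and then prove that Kakeya estimate by projecting onto the $2k$-dimensional subspace in which the transversal structure actually lives. This projection is the new ingredient: it is what restores the Brascamp--Lieb scaling condition \eqref{bl1}, which genuinely fails in $\R^{2n}$ when $k<n$ (as explained in the text preceding the Definition).

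I begin with the geometry of the wave packets. Because $\phi$ is holomorphic, the $\odot$-orthogonal complement of the tangent space to the graph over a point $a$ is exactly the real $2$-plane $V(a):=\sspan_{\R}\{\mathrm I(\nphi(a)),\mathrm I(i\nphi(a))\}$ (as observed in Section 3.1). Consequently, after rescaling $B(0,R)$ to $B(0,1)$, a wave packet of $\mathcal E^{\phi}_{U_j}g_j$ is a plank of thickness $\delta:=R^{-1/2}$ whose $2$-dimensional core has direction within $O(\delta)$ of $V_j:=V(a_j)$, $a_j$ the center of the corresponding cap. Following the argument of \cite{BBFL} together with the induction-on-scales of Guth \cite{G2}, carried out exactly as for Theorem \ref{mlrcn} (this is where the loss $R^{\epsilon}$ enters), Theorem \ref{mlrk} follows from the clean multilinear Kakeya estimate
\[
\int_{[-1,1]^{2n}}\prod_{j=1}^{k}\Big(\sum_{T_j\in\mathbb T_j}\chi_{T_j}\Big)^{\frac{1}{k-1}}\ \lesssim\ \delta^{2k}\prod_{j=1}^{k}(\#\mathbb T_j)^{\frac{1}{k-1}},
\]
valid for every family $\mathbb T_j$ of $\delta$-neighbourhoods of $2$-dimensional affine subspaces of $\R^{2n}$ whose directions lie within $\nu$ of $V_j$, under the transversality hypothesis \eqref{normal}.

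The heart of the matter is this Kakeya estimate, and the observation is the following. In $\R^{2n}$ the datum violates \eqref{bl1} precisely because the cores of the planks occupy only the subspace $W:=V_1+\cdots+V_k$, which by \eqref{normal} has dimension exactly $2k$. Since each $V_j\subseteq W$, the orthogonal projection $\pi_W\colon\R^{2n}\to W\cong\R^{2k}$ sends each plank $T_j$ to a $\lesssim\delta$-neighbourhood, within $W$, of a $2$-plane of the same direction $V_j$; thus $\{\pi_W(T_j)\}$ is again a plank family of the same type, now in $\R^{2k}$. With $p_j=\tfrac{1}{k-1}$ and the quotient maps $\bar L_j\colon W\to W/V_j\cong\R^{2k-2}$ one checks $\sum_j p_j(2k-2)=2k=\dim W$, while the subspace condition $\dim U\le\sum_j p_j\dim(\bar L_j U)$, equivalently $\sum_j\dim(U\cap V_j)\le\dim U$ for every $U\subseteq W$, follows from $\dim(V_1+\cdots+V_k)=2k$: that forces $\sum_j(U\cap V_j)$ to be a direct sum, whence $\sum_j\dim(U\cap V_j)=\dim\sum_j(U\cap V_j)\le\dim U$, exactly the argument of Lemma \ref{haha}. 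The quantitative lower bound in \eqref{normal}, converted through the identity \eqref{elementary} into a lower bound on the pertinent Gram determinant, yields, by stability of the Brascamp--Lieb constant, a uniform bound $\BL(\{\bar L_j\},\mathbf p)\le C(c)<\infty$. Hence Theorem \ref{MLK} applies in $\R^{2k}$ to the families $\{\pi_W(T_j)\}$; slicing $[-1,1]^{2n}$ along the bounded $W^{\perp}$-variable and applying Fubini, on each slice the planks $T_j$ restrict to a subfamily of these projected planks, so the $W$-integral is bounded by $\delta^{2k}\prod_j(\#\mathbb T_j)^{1/(k-1)}$, and integrating over $W^{\perp}\cap[-1,1]^{2n}$ costs only a constant. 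This proves the Kakeya estimate, hence Theorem \ref{mlrk}.

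Two routine points remain. Since $\nphi(a_j)$ varies with $a_j\in U_j$, one first subdivides each $U_j$ into $O(1)$ pieces of diameter $\ll\nu$ so that, on each tuple of pieces, all the $V_j$ lie within $\nu$ of fixed $2$-planes spanning a single subspace $W$, and then sums the $O(1)$ resulting contributions. The induction-on-scales hiding the gain inside $R^{\epsilon}$ is, in terms of volume of writing, the bulk of the proof, but it copies that of Theorem \ref{mlrcn} and brings in nothing new. I expect the genuine obstacle to be the bookkeeping around the projection: verifying that $\pi_W(T_j)$ is a bona fide $\delta$-tube uniformly across the $W^{\perp}$-slices, and that the Brascamp--Lieb constant of the projected datum is uniform over $a_j\in U_j$ --- both follow from \eqref{normal} and the stability of the Brascamp--Lieb constant, but require care.
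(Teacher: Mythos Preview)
Your overall strategy---reduce to a multilinear Kakeya inequality and handle the failure of the scaling condition \eqref{bl1} by projecting onto the $2k$-dimensional span $W=V_1+\cdots+V_k$ of the normal planes---is exactly the paper's approach (see Lemma \ref{test}). The gap is in the exponent of the Kakeya bound.

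You claim $\int_{[-1,1]^{2n}}\prod_j(\sum\chi_{T_j})^{1/(k-1)}\lesssim\delta^{2k}\prod_j(\#\mathbb T_j)^{1/(k-1)}$, but the BBFL reduction from Kakeya to restriction in $\R^{2n}$ requires the ambient-dimensional bound $\delta^{2n}$ (compare Theorem \ref{MLK} with $d=2n$, and the paper's Theorem \ref{mul_kakeya}); with only $\delta^{2k}$ the induction on scales loses a power of $R$ at every step. Your slicing argument discards precisely the missing factor $\delta^{2n-2k}$: you bound the slice-wise count by the global $\#\mathbb T_j$ and then integrate over the $O(1)$ volume of $W^\perp\cap[-1,1]^{2n}$. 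But each $T_j$, being a $\delta$-neighbourhood of a $2$-plane lying (up to $\nu$) inside $W$, intersects only an $O(\delta^{2n-2k})$-portion of the $W^\perp$-slices. To recover $\delta^{2n}$ one must retain the slice-dependent counts, apply H\"older in the $W^\perp$-variable, and then use that each plank meets $O(1)$ of the $\delta$-cubes in $W^\perp$ together with the embedding $\ell^1\hookrightarrow\ell^{k/(k-1)}$. This is exactly the content of the paper's Lemma \ref{test}. After that, the perturbation to $\nu>0$ is done not by invoking Theorem \ref{MLK} on the projections (once the cores of the $T_j$ tilt out of $W$, the ``subfamily on each slice'' picture breaks down), but by Guth's iteration (Lemma \ref{induction}) applied directly in $\R^{2n}$, at the cost of the $\delta^{-\epsilon}$ loss.
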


To prove this, we need the following form of  multilinear Kakeya estimate which we prove by adapting  the argument in \cite{BBFL}.
Once  Theorem \ref{mul_kakeya} below is obtained, one can prove Theorem \ref{mlrk} routinely following the argument in \cite{BBFL} which deduces mutilinear restriction estimate from general multilinear Kakeya estimate.  So, we omit proof of  Theorem \ref{mlrk}.  

\begin{theorem}
\label{mul_kakeya} 
For $1\leq j \leq k$, let $\mathbb{U}_{j,\delta}$ be a collection of $\delta$-neighborhoods  $U_{i,j}$  of  1-dimensional affine $\C$-subspaces  $\sspan\{ v_{i,j}\}$ of $\C^n$. 
Suppose  $|v_{i,j}|\sim 1$ and there is a fixed constant $c>0$  such that 
\[ \vert v_{i_1,1}\wedge v_{i_2,2}\wedge\cdots\wedge v_{i_k,k}\vert\geq c. \]
Then, for every $\epsilon >0$,
\[ \int_{\left[-1,1\right]^{2n}}\prod_{j=1}^k (\sum_{U_{i,j} \in \mathbb{U}_{j,\delta}}\chi_{U_{i,j}})^{\frac{1}{k-1}}\leq C_{\epsilon}\delta^{2n-\epsilon}\prod_{j=1}^k (\#\mathbb{U}_{j,\delta})^{\frac{1}{k-1}}. \]
\end{theorem}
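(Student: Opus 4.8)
The plan is to deduce this complex multilinear Kakeya estimate from the general real-variable multilinear Kakeya estimate of Theorem \ref{MLK} (the Bennett--Carbery--Tao / Zorin-Kranich result), by exhibiting an appropriate Brascamp--Lieb datum $(\mathbf L,\mathbf p)$ on $\R^{2n}$ whose kernels are precisely the real $2$-planes underlying the complex lines $\sspan_{\C}\{v_{i,j}\}$. Concretely, for each $j$ fix a reference vector $v_j$ with $|v_j|\sim 1$ and let $V_j\subset\R^{2n}$ be the real $2$-plane $\sspan_{\R}\{\mathrm I(v_j),\mathrm I(iv_j)\}$; let $L_j:\R^{2n}\to\R^{2n-2}$ be any surjection with $\ker L_j=V_j$, say orthogonal projection onto $V_j^{\perp}$. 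Then each $U_{i,j}$ is the $\delta$-neighborhood of a $(2n-2)$-dimensional real affine subspace which, after the translation bringing $v_{i,j}$'s line through the origin, is within distance $O(c^{-1})\cdot$(something) of $V_j$ once we have arranged the reference vectors $v_j$ to be close to all the $v_{i,j}$; this last closeness is exactly what the transversality hypothesis will be used to localize (see below). The weights are $p_j=\tfrac1{k-1}$ for $1\le j\le k$.

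The key point is to check the two Brascamp--Lieb conditions \eqref{bl1}--\eqref{bl2} for this datum, i.e. that $\BL(\mathbf L,\mathbf p)<\infty$. For the scaling condition \eqref{bl1} we have $\sum_{j=1}^k p_j d_j=\frac{k}{k-1}(2n-2)=2n\cdot\frac{k}{k-1}-\frac{2}{k-1}\cdot k\cdot\frac{1}{?}$; since in general $\frac{k}{k-1}(2n-2)\neq 2n$, condition \eqref{bl1} \emph{fails} on the nose, exactly as the paper warned in the $k<n$ discussion. The way around this — and the step I expect to be the main obstacle — is the projection trick already flagged in the text: one first projects $\R^{2n}$ onto a suitable $2n'$-dimensional coordinate subspace chosen so that the images of the $V_j$ still satisfy the transversality bound and so that on the smaller space the scaling identity $\frac{k}{k-1}(2n'-2)=2n'$ does hold, which forces $n'=k$; equivalently, one works on $\R^{2k}$ after compressing the $2n-2k$ "inert" directions, applies Theorem \ref{MLK} there (now \eqref{bl1} holds by the choice $n'=k$, and \eqref{bl2} follows from the transversality hypothesis via a Plücker/determinant estimate as in Lemma \ref{haha}), and then lifts back up, the extra $\delta^{2n-2k}$ gained from the suppressed directions accounting for the discrepancy between $\delta^{2k}$ on $\R^{2k}$ and the claimed $\delta^{2n}$ on $\R^{2n}$. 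Verifying \eqref{bl2} in the reduced space amounts to showing $\dim W\le \sum_j p_j\dim(L_jW)$ for every subspace $W\subseteq\R^{2k}$, which — since $\dim(L_jW)=2k-\dim(V_j\cap W)$ — reduces to $\sum_{j=1}^k\dim(V_j\cap W)\le (k-1)(\dim W)/? $; the clean way is to note that each $V_j$ is a \emph{complex} line, so $V_j\cap W$ is either $0$, all of $V_j$, or (if $W$ is not a complex subspace) a real line, and the transversality bound $|v_{i_1,1}\wedge\cdots\wedge v_{i_k,k}|\ge c$ says the $V_j$ span $\R^{2k}$ in the strongest sense, so at most $\dim W/2$ of the intersections can be $2$-dimensional — a short linear-algebra argument parallel to the proof of Lemma \ref{haha}.

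Finally there is the standard localization: Theorem \ref{MLK} requires the affine subspaces to lie within a fixed small distance $\nu$ of the \emph{single} reference plane $V_j$, whereas a priori the directions $v_{i,j}$ only vary within the transversality constraint. This is handled by a pigeonhole decomposition of each $\mathbb U_{j,\delta}$ into $O_\nu(1)$ subfamilies, on each of which all the $v_{i,j}$ lie in a $\nu$-cap around some common direction $v_j$; the transversality hypothesis is stable under such $\nu$-perturbations (for $\nu$ small depending on $c$), so each of the finitely many resulting tuples of subfamilies still satisfies the hypotheses of the reduced-space estimate, and summing the $O_\nu(1)$ contributions only changes the implicit constant. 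Assembling these pieces — reference-plane pigeonholing, projection to $\R^{2k}$, verification of \eqref{bl1}--\eqref{bl2} there, application of Theorem \ref{MLK}, and lifting back with the $\delta^{2n-2k}$ bookkeeping — yields the asserted bound $C_\epsilon\,\delta^{2n-\epsilon}\prod_j(\#\mathbb U_{j,\delta})^{1/(k-1)}$, and in fact the $\delta^{-\epsilon}$ can be dropped if one invokes the sharp form from \cite{Z} in place of \cite{BBFL}.
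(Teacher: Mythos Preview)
Your plan shares the paper's first move --- pigeonholing each family $\mathbb U_{j,\delta}$ into $O_\nu(1)$ subfamilies whose directions lie in a $\nu$-cap around a reference $v_j$ --- and your idea of passing to the $2k$-dimensional subspace $V=\sspan_{\C}\{v_1,\dots,v_k\}$ to restore the scaling identity \eqref{bl1} is exactly what the paper does in Lemma~\ref{test}. The gap is in the ``lift back with $\delta^{2n-2k}$ bookkeeping'' step, and it is not a bookkeeping issue.

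Once you slice at a fixed $y\in V^\perp\cong\R^{2n-2k}$, the slice of $U_{i,j}$ is indeed (approximately) a $\delta$-tube in $\R^{2k}$ with direction within $\nu$ of $v_j$, so Theorem~\ref{MLK} applies on each slice and gives $\delta^{2k}\prod_j N_j(y)^{1/(k-1)}$ with $N_j(y)=\#\{i:U_{i,j}\text{ meets the slice at }y\}$. The trouble is integrating in $y$: each $U_{i,j}$, restricted to $[-1,1]^{2n}$, has $V^\perp$-extent of order $\nu$ (not $\delta$), because its underlying $2$-plane makes angle $\sim\nu$ with $V$. Running the H\"older-plus-counting argument from Lemma~\ref{test} then produces an extra factor $(\nu/\delta)^{(2n-2k)\cdot\frac{k}{k-1}}$, which is unbounded as $\delta\to 0$ with $\nu$ fixed. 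You cannot repair this by taking $\nu$ to depend on $\delta$: if $\nu\sim\delta$ the slice estimate is clean, but then the pigeonholing into $\nu$-caps creates $\sim\delta^{-(2n-2)k}$ tuples of subfamilies, and since the exponent $1/(k-1)\le 1$ is concave, summing $\prod_j(\#\text{subfamily})^{1/(k-1)}$ over these tuples costs another unbounded power of $\delta^{-1}$.

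This mismatch between the perturbation scale $\nu$ and the tube scale $\delta$ is precisely why the paper does \emph{not} try to invoke Theorem~\ref{MLK} after projecting. Instead, the paper proves the $\nu=0$ case directly (Lemma~\ref{test}, which is essentially your slice-plus-H\"older argument in the one regime where it is lossless), and then passes to $\nu>0$ by Guth's induction-on-scales (Lemma~\ref{induction}): one tiles $[-1,1]^{2n}$ into cubes $Q$ of side $\delta/\nu$, observes that inside each $Q$ every tube $U_{i,j}$ is contained in a genuinely $\ker L_j$-parallel $O(\delta)$-tube, applies the $\nu=0$ estimate on $Q$, and then reassembles using the thickened tubes at scale $\delta/\nu$. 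This yields the recursion $B(\delta,\nu)\le C\,B(\delta/\nu,\nu)$, whose iteration gives the $\delta^{-\epsilon}$ loss. That iterative mechanism is the missing ingredient in your proposal; without it (or an equivalent device bridging $\nu$ and $\delta$), the projection-to-$\R^{2k}$ strategy does not close.
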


It is likely that $\delta^{-\epsilon}$ can be removed but the current estimate  is good enough for our purpose.  
A similar estimate of lower level of multilinearity   was obtained in \cite{BCT} (see Theorem 5.1) for the typical  mutlilinear Kakeya case.  It was shown by monotonicity of heat flow. However the following argument  is quite flexible, so it can be used to deduce estimate of lower level multilinearity  from various scaling invariant  multilinear estimates. 

By decomposing the collection  $\mathbb{U}_{j,\delta}$ along the directions and the stability of the Brascamp-Lieb constant (see Lemma \ref{test}, and \cite{BBFL}), in order to show 
Theorem \ref{mul_kakeya} it suffices to prove the following reduced version.

\begin{prop}\label{mlkck}
Let $L_j$ be linear maps from $\C^n$ to $\C^{n-1}$ whose kernels are 1-dimensional $\C$-subspaces of $\C^n$ spanned by $v_j$ satisfying  $|v_{j}|= 1$,  for $1\leq j\leq k$. Suppose 
\begin{equation}
\label{k_trans}
\vert v_1\wedge v_2\wedge \cdots \wedge v_k\vert >c
\end{equation}
for some $c>0$. Then there exists $\nu >0$ such that, for every $\epsilon>0$,
$$\int_{\left[-1,1\right]^{2n}}\prod_{j=1}^k (\sum_{U_{i,j} \in \mathbb{U}_{j,\delta}}\chi_{U_{i,j}})^{\frac{1}{k-1}}\leq C_{\epsilon}\delta^{2n-\epsilon}\prod_{j=1}^k (\#\mathbb{U}_{j,\delta})^{\frac{1}{k-1}}$$
holds for all finite collections $\mathbb{U}_{j,\delta}$ of $\delta$-neighborhoods of 1-dimensional affine $\C$-subspaces  $\sspan\{ v_{i,j}\}$ of $\mathbb{C}^n$ provided that  the direction of $U_{i,j}$, $v_{i,j}$\footnote{Whenever we mention the direction vector, it is assumed to have unit length.} 
 is contained in the $\nu$-neighborhood of $V_j \coloneqq ker L_j$.
\end{prop}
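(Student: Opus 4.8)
The plan is to reduce the complex multilinear Kakeya statement to the real multilinear Kakeya estimate of Theorem \ref{MLK} by a suitable Brascamp--Lieb datum, following the doubling trick already used in the proof of Theorem \ref{mlrcn}. Concretely, each $1$-dimensional $\C$-subspace $\sspan_\C\{v_{i,j}\}$ of $\C^n$ is, under the identification $\C^n\cong\R^{2n}$, a $2$-dimensional real subspace, namely $\sspan_\R\{\mathrm I(v_{i,j}),\mathrm I(iv_{i,j})\}$; equivalently it is the common kernel of two real linear maps $\R^{2n}\to\R^{2n-1}$ whose joint kernel is that $2$-plane. So I would set $\widetilde L_j:\R^{2n}\to\R^{2n-2}$ to be a real linear surjection with $\ker\widetilde L_j=\sspan_\R\{\mathrm I(v_j),\mathrm I(iv_j)\}$, and assign the exponent $p_j=\frac1{k-1}$ to each of the $k$ maps. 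A $\delta$-neighborhood of the $1$-dimensional affine $\C$-subspace $\sspan\{v_{i,j}\}$ is (up to comparable constants) a $\delta$-neighborhood of a $2$-dimensional real affine subspace within a small rotation of $V_j:=\ker\widetilde L_j$, so the collections $\mathbb U_{j,\delta}$ are exactly of the type to which Theorem \ref{MLK} applies once we know $\BL(\widetilde{\mathbf L},\mathbf p)<\infty$.

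The heart of the matter is therefore to verify the Brascamp--Lieb conditions \eqref{bl1} and \eqref{bl2} for $(\widetilde{\mathbf L},\mathbf p)$ with $d=2n$, $d_j=2n-2$, $p_j=\frac1{k-1}$, $j=1,\dots,k$. Condition \eqref{bl1} reads $\sum_{j=1}^k \frac{1}{k-1}(2n-2)=2n$, i.e. $\frac{k}{k-1}(2n-2)=2n$; this forces $k(2n-2)=2n(k-1)$, i.e. $2nk-2k=2nk-2n$, i.e. $k=n$ — so \eqref{bl1} fails when $k<n$! This is exactly the scaling obstruction flagged in the text before the statement. The way around it is the projection/induction device attributed to Guth: one does not apply Theorem \ref{MLK} on all of $\R^{2n}$ but, after using the transversality \eqref{k_trans} to split directions into finitely many $\nu$-caps, one restricts attention to the $2k$-dimensional real subspace $W:=\sspan_\R\{\mathrm I(v_j),\mathrm I(iv_j):1\le j\le k\}$ — which is genuinely $2k$-dimensional precisely because $|v_1\wedge\cdots\wedge v_k|>c$. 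Projecting the tubes $U_{i,j}$ orthogonally onto $W$ turns each into (a comparable thickening of) a $\delta$-neighborhood of a $2$-plane in $W$, and on $W$ the Brascamp--Lieb datum $(\mathbf L_j|_W,\mathbf p)$ with $d=2k$, $d_j=2k-2$ satisfies the scaling identity $\frac{k}{k-1}(2k-2)=2k$ exactly. One then integrates the fiberwise $W$-estimate over the orthogonal complement $W^\perp$; since the integrand is constant along each tube in the directions of $W^\perp$ only up to the $\delta$-scale, this is a standard Fubini-with-thickening argument producing the factor $\delta^{2n-2k}$ on top of the $\delta^{2k}$ from the $W$-estimate, giving $\delta^{2n}$ overall (up to $\delta^{-\epsilon}$, which we do not try to remove).

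So the key steps, in order, are: (i) translate each complex $1$-plane into the real $2$-plane $\mathrm I(\ker L_j)$ and each $\delta$-tube accordingly, checking the geometric comparabilities are uniform in the $\nu$-cap decomposition; (ii) use \eqref{k_trans}, i.e. $|v_1\wedge\cdots\wedge v_k|>c$, to produce the transversal $2k$-plane $W$ and the decomposition ensuring every tube's direction lies within $\nu$ of the fixed $V_j$; (iii) verify \eqref{bl2} for the restricted datum on $W$ — this is the analog of Lemma \ref{haha}, and I would prove it the same way, showing that for any subspace $V\subseteq W$ one has $\dim V\ge\sum_j\dim(\ker L_j\cap V)$ because the $2k$ vectors $\{\mathrm I(v_j),\mathrm I(iv_j)\}$ are a basis of $W$ by transversality, whence $\sum_j\dim L_jV=2k\cdot(\#\{j\})/\cdots$ rearranges to $\dim V\le 2k$, trivially true; (iv) apply Theorem \ref{MLK} on $W\cong\R^{2k}$; (v) integrate over $W^\perp$ via Fubini, absorbing the thickness. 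The main obstacle is step (iii)–(iv) packaged together: one must be careful that the constant $\nu$ and the finitely many caps depend only on $c$ and $n$, not on the particular collections, so that the Brascamp--Lieb constant is uniformly bounded across the decomposition — this is exactly the "stability of the Brascamp--Lieb constant" invoked via Lemma \ref{test}, and getting the quantitative dependence right (so that the final sum over $O_{c,n}(1)$ caps costs only a constant) is where the real work lies.
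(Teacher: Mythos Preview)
Your projection idea --- pass to the $2k$-dimensional real subspace $W=\sspan_\R\{\mathrm I(v_j),\mathrm I(iv_j):1\le j\le k\}$ where the Brascamp--Lieb scaling \eqref{bl1} is restored --- is exactly right, and step (iii) (the analogue of Lemma~\ref{haha} on $W$) goes through as you say. The gap is in step (v). Your assertion that ``the integrand is constant along each tube in the directions of $W^\perp$ only up to the $\delta$-scale'' is false once $\delta\ll\nu$: a tube whose direction makes angle $\sim\nu$ with $W$, restricted to the unit cube, has $W^\perp$-extent $O(\delta+\nu)$, not $O(\delta)$. If you run the slice--then--H\"older argument honestly you obtain
\[
\int_{[-1,1]^{2n}}\prod_{j}\Big(\sum_i\chi_{U_{i,j}}\Big)^{\frac1{k-1}}
\;\lesssim\;\delta^{2k}(\delta+\nu)^{2n-2k}\prod_j(\#\mathbb U_{j,\delta})^{\frac1{k-1}},
\]
and since $\nu$ is fixed independently of $\delta$ this is nowhere near $\delta^{2n-\epsilon}$ as $\delta\to 0$. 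Applying Theorem~\ref{MLK} on $W$ does not help here: that theorem absorbs perturbations of direction \emph{within} $W$, whereas the obstruction is the $W^\perp$-component of the direction, which Theorem~\ref{MLK} never sees.

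The paper's remedy is precisely the piece you name (``Guth's induction device'') but do not actually implement. One first proves the estimate in the \emph{unperturbed} case $\nu=0$ (Lemma~\ref{test}), where every tube direction lies exactly in $W$, the tubes factor as $\chi_B\circ L_j$, and the Fubini over $W^\perp$ genuinely produces $\delta^{2n-2k}$; this step uses Theorem~\ref{BL} on $W$, not Theorem~\ref{MLK}. Then one bootstraps to $\nu>0$ by Guth's induction on scales (Lemma~\ref{induction}): on a subcube $Q$ of sidelength $\sim\delta/\nu$, a tube tilted by angle $\nu$ lies inside an $O(\delta)$-thickening of an untilted tube, so Lemma~\ref{test} applies on each $Q$; summing over the $Q$'s against thickened tubes yields $B(\delta,\nu)\le C\,B(\delta/\nu,\nu)$, and iterating gives $B(\delta,\nu)\le C_\epsilon\delta^{-\epsilon}$. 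This two-stage structure --- exact case by projection/Brascamp--Lieb, then perturbation by scale induction --- is what you are missing; the cap decomposition and stability remark you mention are side issues by comparison.
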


The significance of this form is that it no longer needs to satisfy the dimension condition \eqref{bl1} which is necessary for the Brascamp-Lieb inequality. We first consider the case where $v_{i,j}$ is contained in the $ker L_j$. 

\begin{lemma}
\label{test} Suppose $\nu=0$ in Theorem \ref{mlkck}, that is to say, all $v_{i,j}$ are contained in the $ker L_j$. Then, the following inequality holds.
\begin{equation}
\label{simple}
\int\prod_{j=1}^k (\sum_{U_{i,j} \in \mathbb{U}_{j,\delta}}\chi_{U_{i,j}})^{\frac{1}{k-1}}\leq C\delta^{2n}\prod_{j=1}^k (\#\mathbb{U}_{j,\delta})^{\frac{1}{k-1}}.
\end{equation}
The constant $C$ remains uniformly bounded under small perturbation  $v_1,\dots, v_k$. 
\end{lemma}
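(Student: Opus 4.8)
The plan is to reduce \eqref{simple} to the already-available multilinear Kakeya estimate of Theorem \ref{MLK} by checking that the linear data $(\mathbf L,\mathbf p)$ with $L_j:\C^n\to\C^{n-1}$ (viewed as real maps $\R^{2n}\to\R^{2n-2}$) and $p_j=\tfrac1{k-1}$ form a finite Brascamp--Lieb datum. Since $\nu=0$, each tube $U_{i,j}$ is a $\delta$-neighborhood of an affine line parallel to $V_j=\ker L_j$, and $V_j$ is a real $2$-dimensional subspace; thus $U_{i,j}$ is a $\delta$-neighborhood of a $(2n-d_j)$-dimensional affine subspace with $d_j=2n-2$. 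So the geometric hypotheses of Theorem \ref{MLK} are met once we know $\BL(\mathbf L,\mathbf p)<\infty$; the conclusion of Theorem \ref{MLK} is exactly \eqref{simple} (with no $\delta^{-\epsilon}$ loss, by the Zhang refinement), and the stability assertion at the end of the lemma is precisely the continuity of the Brascamp--Lieb constant in the data, which also is built into Theorem \ref{MLK} via its $\nu$-neighborhood formulation.

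So the real content is verifying the two Brascamp--Lieb conditions of Theorem \ref{BL} for $\mathbf L=\{L_j\}_{1\le j\le k}$, $\mathbf p=\{\tfrac1{k-1}\}_{1\le j\le k}$, with $d=2n$ and each $d_j=2n-2$. For \eqref{bl1}: $\sum_j p_j d_j=\tfrac{k}{k-1}(2n-2)=2n\cdot\tfrac{k}{k-1}\cdot\tfrac{n-1}{n}$ — wait, this is not $2n$ in general, so \eqref{bl1} genuinely fails and Theorem \ref{BL} does \emph{not} apply directly. This is the point flagged in the text (``no longer needs to satisfy the dimension condition \eqref{bl1}''). Hence the correct route is \emph{not} through Theorem \ref{BL}/\ref{MLK} as stated, but through a projection trick: I would first project $\R^{2n}$ onto the real span $W$ of $V_1,\dots,V_k$, which has dimension $2k$ by the transversality hypothesis \eqref{k_trans} (this is exactly what $|v_1\wedge\cdots\wedge v_k|>c$ guarantees, since each $V_j=\sspan_{\R}\{\mathrm I(v_j),\mathrm I(iv_j)\}$ and these $2k$ vectors are linearly independent with quantitative lower bound on the Gram determinant). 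The integrand in \eqref{simple} depends only on the $W$-component of the variable, so integrating out the orthogonal complement (of dimension $2n-2k$) costs a harmless factor $\lesssim 1$ over $[-1,1]^{2n-2k}$ and leaves an integral over $[-1,1]^{2k}\subset W$.

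On $W\cong\R^{2k}$ the restricted maps $\tilde L_j=L_j|_W:\R^{2k}\to\R^{2k-2}$ (onto, since $\ker L_j=V_j\subset W$ has the right dimension) with $p_j=\tfrac1{k-1}$ now \emph{do} satisfy \eqref{bl1}: $\sum_j \tfrac1{k-1}(2k-2)=2k$. For \eqref{bl2} I would run the same argument as in Lemma \ref{haha}: for any subspace $V\subseteq\R^{2k}$, $\sum_j\dim(\tilde L_j V)=\sum_j(2k-\dim(V\cap V_j))\ge 2k^2-\dim V$ because $\{V_j\}$ are transversal — the $2k$ vectors $\{\mathrm I(v_j),\mathrm I(iv_j)\}_{1\le j\le k}$ form a basis of $W$ by \eqref{k_trans}, so $\sum_j\dim(V\cap V_j)\le\dim V$ (a vector lying in two distinct $V_j$'s would violate independence). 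Then \eqref{bl2} reads $\dim V\le\tfrac{1}{k-1}(2k^2-\dim V)$, i.e. $\dim V\le 2k$, which is automatic. So $\BL(\tilde{\mathbf L},\mathbf p)<\infty$ and is stable under perturbations (again by \cite{BCCT}), Theorem \ref{MLK} applies on $\R^{2k}$ to give the bound $C\delta^{2k}\prod_j(\#\mathbb U_{j,\delta})^{1/(k-1)}$ for the $W$-integral, and multiplying back the $\delta^{2n-2k}$ from — no: the orthogonal integral contributes $O(1)$, but each real $(2n-2)$-dimensional tube $U_{i,j}$ in $\R^{2n}$ projects to a $(2k-2)$-dimensional tube in $W$, and the fiber integral over the $(2n-2k)$-dimensional complement of each tube is $\lesssim 1$; so in fact $\int_{[-1,1]^{2n}}\prod(\cdots)\lesssim \int_{[-1,1]^{2k}}\prod(\cdots)\lesssim \delta^{2k}\prod(\#\mathbb U_{j,\delta})^{1/(k-1)}$, and since $2k\ge 2n$ is false, I must instead not discard the fibers: the tubes are genuinely $(2n-2)$-dimensional, so the correct bookkeeping gives the extra factor $\delta^{2n-2k}$ from slicing, yielding $\delta^{2n}\prod(\#\mathbb U_{j,\delta})^{1/(k-1)}$ as claimed.

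The main obstacle is precisely this dimensional bookkeeping in the projection step: one must argue carefully that the $W$-projections of the $\delta$-tubes are exactly the $\delta$-tubes to which Theorem \ref{MLK} applies (the direction vectors $v_{i,j}$ lie in $V_j\subset W$, so this is clean), and that Fubini in the splitting $\R^{2n}=W\oplus W^\perp$ produces the factor $\delta^{2(n-k)}$ correctly — each tube's cross-section in $W^\perp$ has measure $\sim\delta^{2(n-k)}$ since the $(2n-2)$-dimensional tube meets a generic $W^\perp$-fiber in a $(2(n-k)-0)$... in fact in a set of the full $W^\perp$ minus nothing; the honest statement is that the indicator of $U_{i,j}$ factors as (indicator of a $(2k-2)$-tube in $W$) $\otimes$ (indicator of all of a $\delta$-ball's worth, i.e. an $O(1)$ box) in $W^\perp$, which contributes $1$, not $\delta^{2(n-k)}$; so the claimed power $\delta^{2n}$ must come entirely from Theorem \ref{MLK} on $\R^{2k}$ giving $\delta^{2k}$ — and $\delta^{2k}\ne\delta^{2n}$. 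I would resolve this by instead \emph{not} projecting but applying Theorem \ref{MLK} directly on $\R^{2n}$ after observing that, although \eqref{bl1} fails for the full-dimensional datum, one may \emph{pad} the datum: adjoin $n-k$ extra maps $L_j$, $k<j\le n$, with transversal $2$-dimensional kernels completing a basis, each weighted by a small $p_j$ chosen so that \eqref{bl1} holds on $\R^{2n}$, bound the extra $\chi$-factors trivially by $1$ (choosing the extra collections to be the single full cube), and conclude. This padding argument is the technically delicate part and is where I expect to spend the most care; the verification of \eqref{bl2} for the padded datum is again a routine variant of Lemma \ref{haha}.
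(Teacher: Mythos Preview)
Your final ``padding'' proposal has a genuine gap. The original $k$ maps already oversaturate the scaling: with $d=2n$, $d_j=2n-2$ and $p_j=\tfrac{1}{k-1}$ one has
\[
\sum_{j=1}^k p_j d_j \;=\; \frac{k(2n-2)}{k-1}\;=\;2n+\frac{2(n-k)}{k-1}\;>\;2n
\qquad\text{for }k<n.
\]
Adjoining further maps with nonnegative weights can only increase this sum, never bring it down to $2n$; so \eqref{bl1} cannot be repaired by padding, and Theorem~\ref{MLK} is inapplicable to any such enlarged datum. The ``small $p_j$'' you hope to insert would have to be negative.

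Your approach \#2 (project onto $W=\sspan_{\R}\{V_1,\dots,V_k\}\cong\R^{2k}$) is in fact the paper's argument, and you abandoned it because of a bookkeeping slip. The assertion ``the integrand depends only on the $W$-component'' is false: all tubes in the $j$-th family share the direction $V_j\subset W$, but their $W^\perp$-centers vary with $i$. Concretely, since the core of $U_{i,j}$ is a translate of $V_j\subset W$, one has (up to constants)
\[
\chi_{U_{i,j}}(x,y)\;\approx\;\chi_{T_{i,j}}(x)\,\chi_{B_{i,j}}(y),
\qquad (x,y)\in W\times W^\perp,
\]
where $T_{i,j}\subset W$ is a $\delta$-tube in direction $V_j$ and $B_{i,j}\subset W^\perp$ is a $\delta$-ball (not an $O(1)$ box). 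The paper then: (i) for each fixed $y$, applies the Brascamp--Lieb inequality on $W\cong\R^{2k}$ to the restricted maps $\widetilde L_j$ (your verification of \eqref{bl1}--\eqref{bl2} on $\R^{2k}$ is exactly right and yields the factor $\delta^{2k}$); (ii) applies H\"older in $y$ to separate the product over $j$; and (iii) for each $j$ uses the trivial bound
\[
\int_{W^\perp}\Big(\sum_i \chi_{B_{i,j}}(y)\Big)^{\frac{k}{k-1}}\,dy
\;\le\;(\#\mathbb U_{j,\delta})^{\frac{1}{k-1}}\sum_i|B_{i,j}|
\;\lesssim\;(\#\mathbb U_{j,\delta})^{\frac{k}{k-1}}\,\delta^{2(n-k)}.
\]
Multiplying (i) and (iii) gives $\delta^{2k}\cdot\delta^{2(n-k)}=\delta^{2n}$, which is the missing power you could not locate. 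The stability claim then follows from stability of the Brascamp--Lieb constant for $(\widetilde{\mathbf L},\mathbf p)$ on $\R^{2k}$, as you note.
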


\begin{proof}
We consider the integral over $\mathbb C^n$ as a double integral over the product spaces of $V=\sspan\lbrace v_1, v_2, \cdots, v_k \rbrace$ and its orthonormal complement.   After suitable change of coordinates, we may assume that  $V\times V^\perp=\C^k\times\C^{n-k}$. 
We write the left hand side of \eqref{simple} as follows:
\[\mathcal I:=\int_{\C^{n-k}}\int_{\C^k}\Big(\prod_{j=1}^k \sum_{U_{i,j}\in\mathbb{U}_{j,\delta}}\chi_{U_{i,j}}(x,y)\Big)^{\frac{1}{k-1}}dxdy.\]
We  may write 
\begin{equation}
\label{product}
\sum_{U_{i,j}\in \mathbb{U}_{j,\delta}}\chi_{U_{i,j}}=\sum_{U_{i,j}\in  \mathbb{U}_{j,\delta}} \chi_{B_{i,j,\delta}}\circ L_j
\end{equation}
for some  $B_{i,j,\delta}\subset \mathbb C^{n-1}$ which are balls of radius $\sim \delta$.

Let us set $W_j=\sspan\{L_j(v_1), \cdots,  L_j(v_k) \}$ and  consider the map $\widetilde {L}_j:\mathbb C^k\to W$ which is given by 
$\widetilde {L}_j(x)=L_j(x,0)$.  Then,  from \eqref{k_trans}  it is easy to see that 
$L_1,\cdots, L_k$  satisfy \eqref{bl1} and \eqref{bl2} with $m=k$ and $p_j=1/(k-1)$. In fact we have already checked this in the proof of Theorem \ref{mlrcn}.
Now taking 
\[ f_j^y(u)=\sum_{U_{i,j}\in  \mathbb{U}_{j,\delta}} \chi_{B_{i,j,\delta}}(u+ L_j(0,y)),  \  j=1,\dots, k,\] 
by Theorem \ref{BL}  we have
\begin{align}
\label{lalala}
\int_{\C^k}\prod_{j=1}^k (f^y_{j})^{\frac{1}{k-1}}(\widetilde L_j(x))dx
    \lesssim \prod_{j=1}^k \Big(\int_{W_j} f_{j}^y(u) du\Big)^{\frac{1}{k-1}}.
    \end{align}
Combining this with \eqref{product} yields    
    \begin{align*}
   \mathcal I \lesssim  \int   \prod_{j=1}^k\Big( \int_{W_j} \sum_{U_{i,j}\in  \mathbb{U}_{j,\delta}} \chi_{B_{i,j,\delta}}(u+ L_j(0,y))du\Big)^{\frac{1}{k-1}}  dy.
\end{align*}
Using H\"older's inequality, 
  \begin{align*}
   \mathcal I \lesssim    \prod_{j=1}^k  \Big( \int \Big( \int_{W_j} \sum_{U_{i,j}\in  \mathbb{U}_{j,\delta}} \chi_{B_{i,j,\delta}}(u+ L_j(0,y))du\Big)^{\frac{k}{k-1}}  dy\Big)^\frac1k.
\end{align*}
Thus, to obtain \eqref{simple} it is sufficient to show that 
\[ \mathcal {I\!I}_j:=
 \int \Big( \int_{W_j} \sum_{U_{i,j}\in  \mathbb{U}_{j,\delta}} \chi_{B_{i,j,\delta}}(u+ L_j(0,y))du\Big)^{\frac{k}{k-1}}  dy
 \lesssim \delta^{2n} (\#\mathbb{U}_{j,\delta})^{\frac{k}{k-1}}.
   \]
By decomposing 
 $\C^{n-k}$ into boundedly overlapping balls $B$ of  side length  $\delta$, we have 
\begin{align*}
\mathcal {I\!I}_j  &\lesssim   \sum_B\int_B \delta^{2k} \Big(\#\lbrace U_{i,j} : U_{i,j}\cap (\C^k\times B) \neq \emptyset \rbrace\Big)^{\frac{k}{k-1}}dy \\
 & \sim   \delta^{2n}\sum_B   (\#\lbrace U_{i,j} : U_{i,j}\cap (\C^k\times B) \neq \emptyset \rbrace)^{\frac{k}{k-1}}
 \\
 & \sim   \delta^{2n} \Big(\sum_B   (\#\lbrace U_{i,j} : U_{i,j}\cap (\C^k\times B) \neq \emptyset \rbrace) \Big)^{\frac{k}{k-1}}.
\end{align*}
Clearly, $\sum_B  \#\lbrace U_{i,j} : U_{i,j}\cap (\C^k\times B) \neq \emptyset \rbrace\lesssim  \#\mathbb{U}_{j,\delta}$. 
Thus, we get the desired inequality.

Finally the last statement is consequence of the stability of Brascamp-Lieb constant \cite{BBFL}, which gives 
\eqref{lalala} with a uniform constant $C$ under small perturbation of the kernel of $v_1, \dots, v_k$.  Thus, 
from the argument above we see that \eqref{simple} holds with a uniform $C$. 
\end{proof}

We prove Theorem \ref{mlkck} by perturbing the directions in the above lemma. 
The  idea is basically  due to Guth \cite{G2}.

\begin{proof}[Proof of Theorem \ref{mlkck}]
We continue to denote  the direction of $U_{i,j}$ by $v_{i,j}\in \mathbb C^{n}$ as in the above, and we  may also assume $|v_{i,j}|=1$. 
Let $B(\delta, \nu)$ be the smallest bound $C$ such that 
$$\int_{\left[-1,1\right]^{2n}}\prod_{j=1}^k (\sum_{U_{i,j} \in \mathbb{U}_{j,\delta}}\chi_{U_{i,j}})^{\frac{1}{k-1}}\leq C\delta^{2n}\prod_{j=1}^k (\#\mathbb{U}_{j,\delta})^{\frac{1}{k-1}}$$ holds. To complete proof, 
it is sufficient  to show that $B(\delta, \nu)\leq C_{\epsilon}\delta^{-\epsilon}$.
This can be shown by the following iterative inequality for $B(\delta, \nu)$. 

\begin{lemma}
\label{induction}
There exists a number $k$ independent of $\delta$ and $\nu$, such that
\begin{equation}
\label{relation}
B(\delta, \nu)\leq k B(\frac{\delta}{\nu},\nu).
\end{equation}
\end{lemma}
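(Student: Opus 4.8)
The plan is to run a Guth-style self-improving (bootstrap) argument on the scale $\delta$. Fix a collection $\mathbb U_{j,\delta}$ of $\delta$-tubes (really $\delta$-neighborhoods of $1$-dimensional $\C$-affine subspaces) whose directions lie in the $\nu$-neighborhood of $V_j=\ker L_j$, and recall that by \eqref{k_trans} the fixed directions $v_1,\dots,v_k$ spanning the $V_j$ are quantitatively transversal in the sense $|v_1\wedge\cdots\wedge v_k|>c$. The key geometric observation is that on any ball $Q\subset[-1,1]^{2n}$ of side length $\nu$, a $\delta$-tube $U_{i,j}$ whose direction is within $\nu$ of $V_j$ is contained, after rescaling $Q$ to the unit cube by the affine map $x\mapsto \nu^{-1}(x-c_Q)$, in a $(\delta/\nu)$-tube whose \emph{rescaled} direction now lies within distance $O(\nu)$ of the \emph{same} fixed subspace $V_j$ — the rescaling does not move directions, and it improves the relative thickness from $\delta$ to $\delta/\nu$. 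Thus the restriction of the tube family to $Q$, after rescaling, is exactly an admissible family for the definition of $B(\delta/\nu,\nu)$.

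First I would partition $[-1,1]^{2n}$ into $O(\nu^{-2n})$ essentially disjoint cubes $Q$ of side $\nu$, write
\[
\int_{[-1,1]^{2n}}\prod_{j=1}^k\Big(\sum_{U_{i,j}\in\mathbb U_{j,\delta}}\chi_{U_{i,j}}\Big)^{\frac{1}{k-1}}
=\sum_{Q}\int_{Q}\prod_{j=1}^k\Big(\sum_{U_{i,j}\in\mathbb U_{j,\delta}}\chi_{U_{i,j}}\Big)^{\frac{1}{k-1}},
\]
and on each $Q$ keep only those tubes meeting $Q$; call their number $N_j(Q)$. Rescaling $Q$ to the unit cube (which costs a factor $\nu^{2n}$ from the Jacobian) and applying the definition of $B(\delta/\nu,\nu)$ to the rescaled family gives
\[
\int_{Q}\prod_{j=1}^k\Big(\sum_{U_{i,j}}\chi_{U_{i,j}}\Big)^{\frac{1}{k-1}}
\lesssim \nu^{2n}\, B(\tfrac{\delta}{\nu},\nu)\,\Big(\tfrac{\delta}{\nu}\Big)^{2n}\prod_{j=1}^k N_j(Q)^{\frac{1}{k-1}}
= B(\tfrac{\delta}{\nu},\nu)\,\delta^{2n}\prod_{j=1}^k N_j(Q)^{\frac{1}{k-1}}.
\]
Summing over $Q$ and using that $\sum_{k-1}$ many exponents $\tfrac{1}{k-1}$ add to $1$, a single application of Hölder's inequality in the $Q$-sum (with exponents $k-1$ and $(k-1)/(k-2)$, or more simply the elementary inequality $\sum_Q \prod_j N_j(Q)^{1/(k-1)}\le \prod_j(\sum_Q N_j(Q))^{1/(k-1)}$ when one of the factors dominates, after first peeling off the geometric gain) bounds $\sum_Q\prod_j N_j(Q)^{1/(k-1)}$ by a constant times $\prod_j(\#\mathbb U_{j,\delta})^{1/(k-1)}$, since each tube of $\mathbb U_{j,\delta}$ meets only $O(1)$ of the cubes $Q$. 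This yields $B(\delta,\nu)\le k\,B(\delta/\nu,\nu)$ with $k$ an absolute constant (depending only on the overlap bounds and on $c$ through Lemma \ref{test}), i.e.\ \eqref{relation}.

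The main obstacle — and the point that needs the most care — is the rescaling step: one must check that after mapping $Q$ to the unit cube the tube directions still lie in a \emph{fixed} $\nu$-neighborhood of the \emph{same} $V_j$ (not a neighborhood that has blown up), so that $B(\delta/\nu,\nu)$ really applies, and that the transversality constant $c$ in \eqref{k_trans} is preserved — this is exactly where the stability of the Brascamp–Lieb constant (Lemma \ref{test}, and \cite{BBFL}) enters, guaranteeing the implied constants stay uniform. A secondary point is bookkeeping the summation over $Q$ so that the exponent $\tfrac{1}{k-1}$ is used correctly and no extra power of $\nu^{-1}$ sneaks in; one wants the final constant $k$ to be genuinely independent of $\delta$ and $\nu$. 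Once \eqref{relation} is in hand, iterating it $\log(1/\delta)/\log(1/\nu)$ times gives $B(\delta,\nu)\le k^{\log(1/\delta)/\log(1/\nu)}=\delta^{-\log k/\log(1/\nu)}$, and since $\nu$ is a fixed (small but positive) constant this is $\le C_\epsilon\delta^{-\epsilon}$ once $\nu$ is chosen small enough that $\log k/\log(1/\nu)<\epsilon$ — which, combined with Lemma \ref{test} as the base case, completes the proof of Proposition \ref{mlkck}.
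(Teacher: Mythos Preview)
There is a genuine gap. Your cubes have the wrong side length, and the counting claim ``each tube of $\mathbb U_{j,\delta}$ meets only $O(1)$ of the cubes $Q$'' is false. A member of $\mathbb U_{j,\delta}$ is the $\delta$-neighborhood of a \emph{real $2$-plane} (a complex line), so restricted to $[-1,1]^{2n}$ it meets $\sim\nu^{-2}$ cubes of side $\nu$, not $O(1)$. Consequently $\sum_Q N_j(Q)\sim\nu^{-2}\,\#\mathbb U_{j,\delta}$, and your summation yields
\[
B(\delta,\nu)\ \lesssim\ \nu^{-\frac{2k}{k-1}}\,B(\tfrac{\delta}{\nu},\nu),
\]
a recursion whose constant \emph{depends on $\nu$}. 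Iterating $\ell\sim\log(1/\delta)/\log(1/\nu)$ times gives an exponent $\log K/\log(1/\nu)$ with $K\sim\nu^{-2k/(k-1)}$, hence $\log K/\log(1/\nu)\to \tfrac{2k}{k-1}>0$ as $\nu\to0$; you can never push this below an arbitrary $\epsilon$, so the bootstrap stalls and does not give $B(\delta,\nu)\le C_\epsilon\delta^{-\epsilon}$.

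The paper's argument (which is the Guth induction you cite, but run at the correct scale) fixes this by taking cubes $Q$ of side $\sim\delta/\nu$, not $\nu$. At that scale a $\delta$-tube whose direction lies within $\nu$ of $V_j$ drifts at most $\nu\cdot(\delta/\nu)=\delta$ across $Q$, so inside $Q$ it may be replaced by a tube \emph{exactly} parallel to $V_j$ and Lemma~\ref{test} (the $\nu=0$ case) applies to give $\int_Q\prod_j(\sum\chi_{U_{i,j}})^{1/(k-1)}\lesssim\delta^{2n}\prod_j N_j(Q)^{1/(k-1)}$. The second key step---which replaces your failed $O(1)$ overlap count---is not to sum $\prod_j N_j(Q)^{1/(k-1)}$ directly, but to thicken each $U_{i,j}$ meeting $Q$ to a $(\delta/\nu)$-tube $\tilde U_{i,j}\supset Q$, so that
\[
\prod_j N_j(Q)^{1/(k-1)}\ \lesssim\ \frac{1}{|Q|}\int_Q\prod_j\Big(\sum_i\chi_{\tilde U_{i,j}}\Big)^{1/(k-1)}.
\]
Summing over $Q$ now reproduces a single integral over $[-1,1]^{2n}$ of \emph{$(\delta/\nu)$-tubes} with directions still within $\nu$ of $V_j$, to which the definition of $B(\delta/\nu,\nu)$ applies directly. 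The factors $\delta^{2n}/|Q|\sim\nu^{2n}$ and $(\delta/\nu)^{2n}$ cancel against each other, leaving $B(\delta,\nu)\le k\,B(\delta/\nu,\nu)$ with $k$ absolute. In short: apply Lemma~\ref{test} at the fine scale and $B(\delta/\nu,\nu)$ at the coarse scale, linked by tube thickening rather than by a tube--cube incidence count.
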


Applying the inequality $l$ times, we have $B(\delta,\nu)\leq k^l B(\frac{\delta}{\nu^l},\nu)$. We only need to choose $\nu$ such that $\epsilon log \frac{1}{\nu}=log k$, then choose $l$ such that $\frac{\delta}{\nu^l}\sim 1$. So, $k^l\sim \delta^{-\epsilon}$ and the desired bound follows. 
\end{proof}

\begin{proof}[Proof of Lemma \ref{induction}]
We partition $\left[ -1,1 \right]^{2n}=\bigcup Q$ by cubes of side length $\sim \frac{\delta}{\nu}$. Then, we write 
\begin{equation}
\label{lala}
\int_{\left[-1,1\right]^{2n}}\prod_{j=1}^k (\sum_{U_{i,j} \in \mathbb{U}_{j,\delta}}\chi_{U_{i,j}})^{\frac{1}{k-1}}=\sum_{Q}\int_{Q}\prod_{j=1}^k (\sum_{U_{i,j} \in \mathbb{U}_{j,\delta,Q}}\chi_{U_{i,j}\bigcap Q})^{\frac{1}{k-1}},
\end{equation}
where $\mathbb{U}_{\j,\delta,Q}=\lbrace U_{i,j}\in\mathbb{U}_{j,\delta} : U_{i,j}\bigcap Q \rbrace\neq \phi$.

We focus on a single $Q$. If $U_{i,j}\in \mathbb{U}_{j,\delta,Q}$, we note that distance between $v_{i,j}$ and $kerL_j$ is less than $\nu$. So, since $Q$ has side length $\sim \frac{\delta}{\nu}$, there exists a $O(\delta)-$neighborhood $U_{i,j}'$ such that $U_{i,j}\bigcap Q\subset U_{i,j}'\bigcap Q$ and $U_{i,j}'$ is parallel to $kerL_j$. Now we can use Lemma \ref{test} to get
$$ \int_Q\prod_j \Big( \sum_{U_{i,j}\in\mathbb{U}_{j,\delta,Q}}\chi_{U_{i,j}'} \Big)^{\frac{1}{k-1}}\lesssim \delta^{2n} \prod_j \left( \#\mathbb{U}_{j,\delta,Q} \right)^{\frac{1}{k-1}}.$$
If $U_{i,j}\in \mathbb{U}_{j,\delta,Q}$, let $\tilde{U}_{i,j}=U_{i,j}+O(\frac{\delta}{\nu})$ such that $\tilde{U}_{i,j}$ contains $Q$. So,
\[  \prod_j \left( \#\mathbb{U}_{j,\delta,Q} \right)^{\frac{1}{k-1}}  \lesssim   \frac{1}{|Q|}\int_Q \prod_j \Big( \sum_{U_{i,j}\in\mathbb{U}_{j,\delta,Q}}\chi_{\tilde{U}_{i,j}}(x) \Big)^{\frac{1}{k-1}}dx.\]  
Combining these two estimates gives 
$$
 \int_Q\prod_j \Big( \sum_{U_{i,j}\in\mathbb{U}_{j,\delta,Q}}\chi_{U_{i,j}'} \Big)^{\frac{1}{k-1}}\lesssim \delta^{2n}\frac{1}{|Q|}\int_Q \prod_j \Big( \sum_{U_{i,j}\in\mathbb{U}_{j,\delta,Q}}\chi_{\tilde{U}_{i,j}}(x) \Big)^{\frac{1}{k-1}}dx.$$
Recalling \eqref{lala}, we put  all the estimates over  $Q$ together. Since $|Q|\sim (\delta/\nu)^{2n}$,  recalling the definition of 
$B(\delta, \nu)$, we get 
\begin{align*}
\int_{\left[-1,1\right]^{2n}}\prod_{j=1}^k (\sum_{U_{i,j} \in \mathbb{U}_{j,\delta}}\chi_{U_{i,j}})^{\frac{1}{k-1}}  
&\lesssim
 \nu^{2n}\int_{\left[-1,1\right]^{2n}}\prod_j \Big(\sum_{U_{i,j}\in\mathbb{U}_j}\chi_{\tilde{U}_{i,j}} \Big)^{\frac{1}{k-1}}\\
& \leq \nu^{2n}B(\frac{\delta}{\nu},\nu)\left(\frac{\delta}{\nu}\right)^{2n}\prod_j (\#\mathbb{U}_{j,\delta})^{\frac{1}{k-1}}.
\end{align*}
This gives the desired \eqref{relation}. 
\end{proof}


\begin{remark}\label{e-removal}
As mentioned before, it is an open problem whether it is possible to remove $R^{\epsilon}$ in the above estimate in Theorem \ref{mlrk}. But,  if we consider the estimate for $p>\frac{2k}{k-1}$,  we may remove  $R^{\epsilon}$ if $H\phi\neq 0$ on  $ \overline U$. This can be shown by the epsilon removal argument, see \cite[Appendix]{BG}.
\end{remark}

\section{Restriction estimates for complex hypersurfaces}

\subsection{Complex hypersurfaces given by quadratic polynomials}
Now we prove the linear restriction estimate. We first show Theorem \ref{main} with a quadratic function  $\phi$.
Once it is done, extension to general holomorphic function requires only small modification of the argument. 

\begin{prop}\label{m1}  Let $n\ge 3$ be an odd number and  
$\phi(z)=z^t M z$ for a nonsingular  symmetric  matrix $M$. Suppose $D$ is a bounded domain.
Then,  for $p>\frac{2(n+2)}{n}$,   there is a constant $C_p$ such that 
$
\Vert  \mathcal E^\phi_D f \Vert_{p}\leq C_p \Vert f \Vert_{p}.
$
\end{prop}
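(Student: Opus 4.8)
The plan is to carry out a Bourgain--Guth broad--narrow induction on the radius, feeding the multilinear estimates of Section~3 into the broad part.

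\emph{Reductions.} By the $\ve$-removal argument (Remark~\ref{e-removal} and \cite[Appendix]{BG}) it suffices to prove the local estimate $\|\mathcal E_D^\phi f\|_{L^p(B(0,R))}\le C_\ve R^\ve\|f\|_p$ for all $\ve>0$, $R\ge1$ and $p>\frac{2(n+2)}{n}$. Since $M$ is a nonsingular \emph{complex} symmetric matrix it is congruent over $\mathbb C$ to the identity, $M=A^tA$, so after the linear change $z\mapsto A^{-1}z$ and the dual linear change in $w$, which affect the $L^p\to L^p$ bound only by a constant, we may assume $\phi(z)=z_1^2+\cdots+z_{n-1}^2$. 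Then $H\phi\equiv 2I$, the normal $n(\phi,a)$ is a multiple of $(2\bar a_1,\dots,2\bar a_{n-1},-1)$, Lemma~\ref{decay_est} and Theorems~\ref{mlrcn} and \ref{mlrk} apply on every subregion, and the transversality quantities $|\nphi(a_1)\wedge\cdots\wedge\nphi(a_k)|$ become Vandermonde-type expressions in the $a_i$ that degenerate precisely when the $a_i$ cluster in a proper complex-affine subspace.

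\emph{Broad--narrow split and the broad part.} Fix a large $K$, partition $D$ into $\sim K^{2n-2}$ cubes $\tau$ of side $K^{-1}$, and write $\mathcal E_D^\phi f=\sum_\tau\mathcal E_\tau^\phi f_\tau$. Calling a cube heavy at $x$ if $|\mathcal E_\tau^\phi f_\tau(x)|$ is within $K^{-C}$ of $\max_{\tau'}|\mathcal E_{\tau'}^\phi f_{\tau'}(x)|$, let $\ell(x)$ be the smallest complex dimension of a complex-affine subspace whose $O(K^{-1})$-neighborhood contains all heavy cubes. If $\ell(x)=n-1$ (broad point), the Vandermonde structure yields $n$ heavy cubes with $|\det(\nphi(a_{\tau_1}),\dots,\nphi(a_{\tau_n}))|\gtrsim K^{-C}$, whence $|\mathcal E_D^\phi f(x)|\lesssim K^{C}\prod_{j=1}^n|\mathcal E_{\tau_j}^\phi f_{\tau_j}(x)|^{1/n}$; if $\ell(x)<n-1$ (narrow point) then $|\mathcal E_D^\phi f(x)|\lesssim K^{C}\sum_{W}|\mathcal E_{D\cap W}^\phi f(x)|$, the sum over the $O(K^{C})$ complex-affine slabs $W$ of thickness $K^{-1}$ and complex dimension $<n-1$. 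For the broad part one sums over admissible transverse $n$-tuples, applies Theorem~\ref{mlrcn}, H\"older in the cubes, and the Bernstein inequality $\|f_\tau\|_{L^2(\tau)}\lesssim K^{-(2n-2)(1/2-1/p)}\|f_\tau\|_{L^p}$ on each $K^{-1}$-cube; provided $p\ge\frac{2n}{n-1}$ (which holds since $\frac{2n}{n-1}<\frac{2(n+2)}{n}$) the Bernstein and cardinality losses cancel, and the broad contribution is $\lesssim C_\ve R^{\ve}K^{C}\|f\|_p$, the $R^{\ve}$ coming only from Theorem~\ref{mlrcn}.

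\emph{Narrow part and closing the induction.} For each narrow slab $W$, translate so $W$ passes through $0$ and parabolically rescale its $K^{-1}$-thin transverse directions to unit width; since $H\phi$ is nondegenerate the rescaled surface is again the graph of a nonsingular complex quadratic form (a shear of the original one), now in complex dimension $\le n-1$ and at a coarser scale, so the inductive hypothesis applies after covering the rescaled (anisotropic) region by balls. Summing over the $O(K^{C})$ slabs, combining with the broad part, choosing $K=K(\ve)$ large, and iterating on $R$ closes the estimate; the loss incurred in the anisotropic rescaling of the narrow slabs is exactly what the $n$-linear gain must beat, and this balance is what pins the exponent at $p>\frac{2(n+2)}{n}$. (The statement for general holomorphic $\phi$ with $\det H\phi\neq0$ on $\cD$ then follows by Taylor-expanding on each $K^{-1}$-cube and absorbing the higher-order terms; the base case $n=2$ is in \cite{C1}.)

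\emph{The main difficulty.} The narrow case is the crux. Unlike for elliptic hypersurfaces, a genuine linear slice of the complex quadric can be totally flat --- along an isotropic subspace of the underlying bilinear form, whose dimension grows with $n$ --- so one cannot naively replace the narrow contribution by a lower-dimensional nondegenerate extension operator and recurse with the multilinear machinery (cf.\ Remark~\ref{oddrmk}). One must instead organise the narrow slabs so that at every stage the rescaled object retains a nonsingular quadratic piece, control the residual flat directions via local constancy, and verify that the resulting bookkeeping --- where the parity of $n$ enters and which forces $\frac{2(n+2)}{n}$ rather than the conjectured $\frac{2n}{n-1}$ --- really closes. This is precisely the inefficiency of the multilinear strategy anticipated in the introduction.
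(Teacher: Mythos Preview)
Your broad part is fine, but the narrow part contains a genuine gap that you yourself flag in the last paragraph without resolving: the claim that ``the rescaled surface is again the graph of a nonsingular complex quadratic form'' is false in general. Rotating $\phi(z)=z\cdot z$ by a complex-linear map $U$ gives $z^tU^tUz$, and since $U^tU\neq I$ for unitary $U$, the restriction of the form to the slab direction can be singular (an isotropic subspace). Hence the inductive hypothesis is simply not available on the narrow slabs, and no amount of anisotropic rescaling repairs this: after stretching the thin directions the Hessian eigenvalues separate by a factor $K^2$, so the rescaled phase leaves the normalized class on which the induction is set up. As stated, the argument does not close.

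The paper's proof avoids recursion on lower-dimensional surfaces altogether. It fixes a \emph{single} intermediate $k$, uses the $k$-linear estimate (Theorem~\ref{mlrk}) rather than the $n$-linear one for the transverse case, and in the narrow case exploits only two facts: (i) the cubes whose normals lie within $K^{-1}$ of a $(k-1)$-dimensional $\C$-subspace number at most $CK^{2k-4}$, and (ii) $L^2$-orthogonality on each $K$-cube together with local constancy converts this count into the bound $K^{(2k-4)(p-2)}\sum_\mfq\|\ce f_\mfq\|_p^p$. One then applies parabolic rescaling to each individual $K^{-1}$-cube $\mfq$ (full-dimensional, so nondegeneracy is preserved), producing the inductive inequality
\[
\mathcal A_p(R)\le C(K)+CK^{-\ve_0}\mathcal A_p(R)
\quad\text{for}\quad
p>\max\Big(\frac{2(n-k+2)}{n-k+1},\ \frac{2k}{k-1}\Big).
\]
Balancing the two terms forces $k=\tfrac{n}{2}+1$ (this is where $n$ even is used) and yields $p>\tfrac{2(n+2)}{n}$. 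The point is that the narrow case is handled by \emph{counting plus orthogonality}, never by passing to a lower-dimensional extension operator; this sidesteps exactly the degeneracy that blocks your recursion.
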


Let  $Q(a,r)$ be the cube centered at $a$ with side length $r$. 
For simplicity we also set 
\[\mathcal E=  \mathcal E^{z^t\! M z}_{Q(0,1)}. \] 
Let $R\ge 1$ and,  for given $p$,   we  define 
\[   \mathcal A_{p}(R) =\sup\{ \| \mathcal E f\|_{L^p(Q(0,R))} :  \|f\|_p\le 1\} .   \] 
To prove Proposition \eqref{m1}, by finite decomposition, translation and scaling,  it is enough to show   $\mathcal A_{p}(R) \le C$  for $p>\frac{2(n+2)}{n}$.
For this we make use of the following lemma. 

\begin{lemma}[Parabolic rescaling]
\label{pscaling}
Suppose $f$ is supported in  $Q(a,r)\in Q(0,1)$ and $r<  (\sqrt d(1+2\|M\|))^{-1}$. Then, 
$$\Vert \mathcal E f \Vert_{L^p_{Q(0,R)}}\leq Cr^{2(n-1)-\frac{4n}{p}}\mathcal A_{p}(R)\Vert f_{\alpha}\Vert_p.$$
\end{lemma}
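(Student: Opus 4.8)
The plan is a standard parabolic rescaling, the point being that $\phi(z)=z^tMz$ is homogeneous of degree two, so the rescaled surface is again the graph of the \emph{same} $\phi$. Write $z=a+r\zeta$ with $\zeta\in Q(0,1)$ and put $g(\zeta)=f(a+r\zeta)$, which is supported in $Q(0,1)$. Expanding the quadratic, $\phi(a+r\zeta)=\phi(a)+2r\,\zeta^tMa+r^2\phi(\zeta)$; substituting this into the phase $w\odot(z,\phi(z))$ and using $\R$-bilinearity of $\odot$, one finds $w\odot(z,\phi(z))=w\odot(a,\phi(a))+\widetilde w\odot(\zeta,\phi(\zeta))$, where $\widetilde w_j=r\big(w_j+2w_n\overline{(Ma)_j}\big)$ for $1\le j\le n-1$ and $\widetilde w_n=r^2w_n$. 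Letting $\Psi\colon w\mapsto\widetilde w$ denote this $\C$-linear map and recalling that $dz=r^{2n-2}\,d\zeta$, this yields the pointwise identity
\[
\mathcal E f(w)=r^{2n-2}\,e^{\,i\,w\odot(a,\phi(a))}\,\mathcal E g(\Psi w).
\]

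Next I would take $L^p$ norms over $Q(0,R)$ and change variables $\widetilde w=\Psi w$. As a $\C$-linear map, $\Psi$ is, after discarding a shear which does not affect the determinant, diagonal with diagonal entries $r$ (of multiplicity $n-1$) and a single $r^2$; hence $\det_{\R}\Psi=r^{2n+2}$, producing a factor $r^{-(2n+2)}$ in the integral. This is exactly where the hypothesis $r<(\sqrt d\,(1+2\|M\|))^{-1}$ is used: since $|a|\le\tfrac12\sqrt d$ on $Q(0,1)$ and $r\le 1$, it forces $\Psi\big(Q(0,R)\big)\subseteq Q(0,R)$, so that
\[
\big\|\mathcal E g\big\|_{L^p(\Psi(Q(0,R)))}\le\big\|\mathcal E g\big\|_{L^p(Q(0,R))}\le\mathcal A_p(R)\,\|g\|_p
\]
by the definition of $\mathcal A_p(R)$.

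Finally I would collect the powers of $r$: a factor $r^{2n-2}$ from the pointwise identity, a factor $r^{-(2n+2)/p}$ from the Jacobian of $\Psi$, and $\|g\|_p=r^{-(2n-2)/p}\|f\|_p$ from undoing the rescaling in $z$. Since $\tfrac{2n+2}{p}+\tfrac{2n-2}{p}=\tfrac{4n}{p}$, this gives $\|\mathcal E f\|_{L^p(Q(0,R))}\le r^{\,2(n-1)-4n/p}\,\mathcal A_p(R)\,\|f\|_p$, which is the asserted bound (reading $\|f_\alpha\|_p$ as $\|f\|_p$, $f$ supported in $Q(a,r)$). There is no genuine obstacle here; the only points needing care are the bookkeeping of these three powers of $r$ and checking that the stated smallness of $r$ is precisely what makes $\Psi(Q(0,R))\subseteq Q(0,R)$. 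Both rely crucially on $\phi$ being \emph{exactly} quadratic, so that after rescaling the phase is again of the form $\widetilde w\odot(\zeta,\phi(\zeta))$ with the very same $\phi$; for a general holomorphic $\phi$ this step has to be modified, as the text notes.
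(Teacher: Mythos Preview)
Your argument is correct and follows essentially the same parabolic rescaling as the paper: expand the quadratic phase at the center $a$, identify the affine map $\Psi(w)=(r(w'+w_n\overline{\nabla\phi(a)}),r^2w_n)$, change variables with Jacobian $r^{2n+2}$, use the smallness of $r$ to land back inside $Q(0,R)$, and collect the three powers of $r$. The only differences are cosmetic---the paper absorbs the factor $r^{2n-2}$ into $f_{a,r}$ rather than carrying it outside, and writes the shear as $w'+w_n\nabla\psi(a)$ (omitting the conjugate you correctly include)---but the structure and all the exponents match.
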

\begin{proof}
We first note that
$$\mathcal E f(w)=\int_{Q(a,r)} e^{iw\odot (z,z^tMz)}f(z)dz.$$
Let $\psi(z)=z^tMz$. Then $\psi(z-a)=\psi(z)-\psi(a)-\nabla \psi(a)\cdot(z-a)$.
We also set $f_{a,r}(z)= r^{2n-2} f(a+rz)$.
Using these notations, we may write 
\begin{eqnarray*}
\big|\mathcal E f(w)\big| &=& \Big|\int_{Q(a,r)}e^{i\left((w'+w_n\nabla\psi(a))\odot z +w_n\odot \psi(z-a)\right)}f(z)dz\Big|
\\
&=&\Big|\int_{Q(0,1)}e^{i\left( r(w'+w_n\nabla\psi(a))\odot z +r^2  w_n \odot  \psi(z)  \right)}f_{a,r}(z)dz\Big|
\\
&=&  \big|\mathcal E f_{a,r}\big(r(w'+w_n\nabla\psi(a)), r^2  w_n \big)\big|.
\end{eqnarray*}

We integrate with  respect to $w$. By making change of variables $r(w'+w_n\nabla\psi(a))\to w'$ and $r^2 w_n\to w_n$
\begin{align*}
\Vert \mathcal E f\Vert^p_{L^p_{Q(0,R)}}
&\le r^{-(2n+2)} \int_{\vert w'\vert<\sqrt d(1+2\|M\|) rR,\vert w_n\vert<r^2R}\big| \mathcal E f_{a,r}(w) \big|^pdw\\
& \le r^{-(2n+2)} \int_{Q(0,R)}\big| \mathcal E f_{a,r}(w) \big|^pdw.
\end{align*}
Now, using  the definition of $\mathcal A_{p}$, we see 
\begin{align*}
\Vert \mathcal E f\Vert_{L^p_{Q(0,R)}} & \leq \mathcal A_{p}(R) r^{-\frac{2n+2}p} \|f_{a,r}\|_p.\end{align*}
This gives the desired bound by rescaling.
\end{proof}

\begin{proof}[Proof of Proposition \ref{m1}] Let $R>0$ be a large number and  fix a large number $1\ll K\ll R$.  Let $\{\mathfrak  q\}$ be a collection of essentially disjoint cubes with side length  $\sim K^{-1}$ which partitions 
 $Q(0,1)$ and let $\{\mathbf  Q\}$ be a collection of essentially disjoint cubes with side length  $\sim K$ which partitions 
 $Q(0,R)$. Thus
 \[ Q(0,1)=\bigcup \mathfrak q, \qquad   Q(0,R)=\bigcup \mathbf Q.\]  
We set 
\[ f_\mathfrak q=f\chi_\mathfrak q.\] 
So, we have $f=\sum_\mfq f_\mfq$.  
We will denote by $C(K)$ some powers of $K$ and this may vary from line to line.

We first   consider $\{\ce f_\mfq\}$ on each cube $\mbq$. 
An important observation is that,  {on} each cube $\mbq$,  $\ce f_\mfq$ behaves as if 
it were a constant. 
To make it precise we need a bit of manipulation.
Let $\eta\in\mathcal S(\mathbb C^n)$ such that $\widehat{\eta}(w)=1$ if $|w|\leq 1$ and $\widehat{\eta}(w)=0$ if $|w|\geq 2$. 
For ${\mfq}$ let $z_{\mfq}$ be the center of $\mfq$ and set 
\[\eta_{\mfq}(w)=\frac{e^{2\pi iw\cdot (z_{\mfq}, \psi(z_\mfq))}}{K^{2n}}\eta (\frac{w}{K}).\] 
Thus,  we have 
\begin{equation}\label{conv}
\ce f_{\mfq}(w)=\ce f_{\mfq}\ast\eta_{\mfq} (w).
\end{equation}

Let use denote the center of $\mbq$ by $w_\mbq$. 
Put $\zeta(x)=\max\limits_{|x'|\leq \sqrt d}|\eta(x+x')|^{\frac{1}{n}}$ and set 
\[  C_{\mfq}^{\, \mbq}:=K^{-2n^2}\left(\int \vert \ce f_{\mfq}(w_\mbq-s)\vert^{\frac{1}{n}} \zeta(\frac{s}{K}) ds\right)^n.
\] 
The following is a slight modification of the argument in \cite{Te} (see,  p.1024).

\begin{lemma} \label{compare} Suppose $w\in \mbq$, then we have
\begin{equation}
\label{compare1}
 |\ce f_{\mfq}(w)|  \lesssim   \cqq \lesssim   K^{-2n^2}\int \vert \ce f_{\mfq}(w -s)\vert \widetilde \zeta(\frac{s}{K}) ds,
\end{equation}
where $\widetilde \zeta(x)=\max\limits_{|x'|\leq \sqrt d}|\zeta(x+x')|$. Furthermore, let  
$\mfq_1, \mfq_2, \cdots, \mfq_k\in \{\mfq\}$. If $p\ge 1$ and $w\in \mbq$,  we have 
\begin{equation}
\label{compare2}  \Big(   \prod_1^k  C_{\mfq_j}^{\,\mbq}  \Big)^{\frac{p}{k}}
\lesssim  
C(K)
 \prod_{j=1}^k \int  \vert \ce f_{\mfq_j}(w-s_j)\vert^{\frac{p}{k}} \widetilde \zeta(\frac{s_j}{K}) ds_j .
 \end{equation}

\end{lemma}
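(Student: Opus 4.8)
The plan is to read both parts of the lemma as bookkeeping of the \emph{locally constant property} encoded in \eqref{conv}. Since $f_\mfq$ is supported in a cube of side $\sim K^{-1}$ and $\psi$ has bounded derivative on $Q(0,1)$, the function $\ce f_\mfq$ has Fourier transform supported in a ball of radius $\sim K^{-1}$ about $(z_\mfq,\psi(z_\mfq))$, which is why $\ce f_\mfq=\ce f_\mfq\ast\eta_\mfq$. The one genuinely analytic ingredient I would invoke is the standard Bernstein/local-constancy inequality at the sub-unit exponent $1/n$: if $\widehat g$ is supported in a ball of radius $\rho$, then
\[
|g(w)|\ \lesssim\ \Big(\rho^{2n}\!\int |g(y)|^{1/n}\,\omega\big(\rho(w-y)\big)\,dy\Big)^{n}
\]
for a weight $\omega$ of sufficiently fast decay. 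Applied with $\rho\sim K^{-1}$ and $g=\ce f_\mfq$, this is exactly the shape of $\cqq$, with $\zeta$ in the role of $\omega$. One elementary fact is used throughout: for $w\in\mbq$ one has $|w-w_\mbq|\lesssim K$, hence $|(w-w_\mbq)/K|\le\sqrt d$, so translating the weight by $w-w_\mbq$ is absorbed for free into the maxima defining $\zeta$ and $\widetilde\zeta$, and this is all that passing between the base points $w$ and $w_\mbq$ costs.

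For \eqref{compare1}, I would obtain the first inequality by applying the displayed bound at the point $w$, changing variables $y=w-s$, and recentering the weight at $w_\mbq$ as above, which gives $|\ce f_\mfq(w)|\lesssim\cqq$. For the second inequality I would instead start from the definition of $\cqq$, recenter its weight from $w_\mbq$ to $w$ (which replaces $\zeta$ by $\widetilde\zeta$), and apply H\"older's inequality in $s$ with exponents $n$ and $\tfrac{n}{n-1}$, splitting $\widetilde\zeta=\widetilde\zeta^{1/n}\cdot\widetilde\zeta^{(n-1)/n}$:
\[
\Big(\int|\ce f_\mfq(w-s)|^{1/n}\,\widetilde\zeta(s/K)\,ds\Big)^{n}\ \le\ \Big(\int|\ce f_\mfq(w-s)|\,\widetilde\zeta(s/K)\,ds\Big)\Big(\int\widetilde\zeta(s/K)\,ds\Big)^{n-1}.
\]
Since $\eta$ is Schwartz, $\int\widetilde\zeta(s/K)\,ds\lesssim K^{2n}$, so the second inequality follows up to a power of $K$ of the kind denoted $C(K)$.

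For \eqref{compare2}, the same recentering shows that for each $j$ and each $w\in\mbq$,
\[
C_{\mfq_j}^{\,\mbq}\ \lesssim\ K^{-2n^2}\Big(\int|\ce f_{\mfq_j}(w-s)|^{1/n}\,\widetilde\zeta(s/K)\,ds\Big)^{n}.
\]
I would then form the product $\big(\prod_{j=1}^{k}C_{\mfq_j}^{\,\mbq}\big)^{p/k}$ and apply H\"older's (equivalently Jensen's) inequality to each factor against the finite measure $\widetilde\zeta(s/K)\,ds$; this is legitimate because $p\ge1$ and $k\le n$ force the exponent $np/k\ge p\ge1$, giving
\[
\Big(\int|\ce f_{\mfq_j}(w-s)|^{1/n}\,\widetilde\zeta(s/K)\,ds\Big)^{np/k}\ \le\ \Big(\int\widetilde\zeta(s/K)\,ds\Big)^{np/k-1}\int|\ce f_{\mfq_j}(w-s)|^{p/k}\,\widetilde\zeta(s/K)\,ds .
\]
Multiplying over $j$ and collecting every power of $K$ (including the $K^{-2n^2p}$) into a single $C(K)$ yields \eqref{compare2}.

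The only real obstacle is the first inequality of \eqref{compare1}, i.e.\ the local-constancy step at the fractional exponent $1/n$: this is the single place where band-limitedness of $\ce f_\mfq$ (equivalently \eqref{conv}) is used, and it requires some care that the weight produced there decays fast enough to be controlled by $\zeta$ after the $\sqrt d$-enlargement. Everything else is H\"older's inequality together with the recentering remark; I would carry out this step following the computation in \cite{Te}.
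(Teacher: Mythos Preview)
Your proposal is correct and follows essentially the same route as the paper. Both arguments use: (i) a sub-unit Bernstein/local-constancy bound for the first inequality in \eqref{compare1}, derived from the band-limitedness encoded in \eqref{conv}; (ii) the recentering trick $|w-w_\mbq|\le K\sqrt d$ to pass between $\zeta$ and $\widetilde\zeta$; and (iii) H\"older/Jensen against the weight $\widetilde\zeta(\cdot/K)$ for the second inequality in \eqref{compare1} and for \eqref{compare2}. The paper carries out (i) explicitly by applying ordinary Bernstein to the function $s\mapsto \ce f_\mfq(w-s)\,\eta_\mfq(s)$, whose Fourier support lies in a ball of radius $\sim K^{-1}$, obtaining $\|\cdot\|_1\lesssim K^{-2n^2/n'}\big(\int|\cdot|^{1/n}\big)^n$; you cite this as a known fact and defer to \cite{Te}, which is exactly the paper's reference. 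Your observation that the second inequality in \eqref{compare1} only comes with a $C(K)$ rather than the displayed $K^{-2n^2}$ is in fact correct: H\"older against $\widetilde\zeta(\cdot/K)$ produces an extra $(K^{2n})^{n-1}$, so the stated constant is a typo (a quick check with $\ce f_\mfq$ locally constant confirms this), and the paper's own proof---``the second inequality can be shown by the same argument''---yields the same $C(K)$ loss you obtain.
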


\begin{proof}
We first observe that 
\begin{eqnarray*}
\Vert \ce f_{\mfq}(w-\cdot)\eta_{\mfq}(\cdot)\Vert_{1}
                 & \leq & \Vert \ce f_{\mfq}(w-\cdot)\eta_{\mfq}(\cdot)\Vert_{\infty}^{\frac{1}{n'}}\int|\ce f_{\mfq}(w-s)\eta_{\mfq}(s)|^{\frac{1}{n}}ds
                 \\
                 & \lesssim & K^{\frac{-2n}{n'}}\Vert \ce f_{\mfq}(w-\cdot)\eta_{\mfq}(\cdot)\Vert_{1}^{\frac{1}{n'}}\int|\ce f_{\mfq}(w-s)\eta_{\mfq}(s)|^{\frac{1}{n}}ds.
\end{eqnarray*}
Since  the Fourier transform of $\ce f_{\mfq}(w-\cdot)\eta_{\mfq}(\cdot)$ is  supported in a ball of radius $\lesssim \frac1K$, 
the last inequality follows from Bernstein's inequality. This immediately yields
\[ \int |\ce f_{\mfq}(w-s)\eta_{\mfq}(s)|ds 
\lesssim K^{\frac{-2n^2}{n'}} \left( \int |\ce f_{\mfq}(w-s)\eta_{\mfq}(s)|^{\frac{1}{n}}ds\right)^n . \]
Thefore, by \eqref{conv} and the above, we get 
\begin{align*}
|\ce f_{\mfq}(w)| 
                 & \lesssim  K^{-2n^2} \left( \int \vert \ce f_{\mfq}(w-s)\vert^{\frac{1}{n}}\vert\eta(\frac{s}{K})\vert^{\frac{1}{n}}ds \right)^n \\
                 & =    K^{-2n^2} \left( \int \vert \ce f_{\mfq}(w_\mbq-s)\vert^{\frac{1}{n}}\big|\eta(\frac{s+w-w_\mbq}{K})\big|^{\frac{1}{n}} ds \right)^n.
                 \end{align*}
                 Since $|w-w_\mbq|\le  K\sqrt d$, it follows that $\big|\eta(\frac{s+w-w_\mbq}{K})\big|^{\frac{1}{n}}\le  \zeta(\frac s K)$. Thus we have 
the desired $  |\ce f_{\mfq}(w)| \lesssim    \cqq $.  
For \eqref{compare2},  
note that 
\begin{align*}
 \Big(   \prod_1^k  C_{\mfq_j}^{\,\mbq}  \Big)^{\frac{p}{k}}
 &= C(K)\bigg(  \prod_{j=1}^k \int  \vert \ce f_{\mfq_j}(w-s_j)\vert^{\frac{1}{n}} \zeta(\frac{s_j-w+w_\mbq}{K}) ds_j\bigg)^{\frac{pn}k}
 \\
 &\lesssim  C(K)
 \prod_{j=1}^k \int  \vert \ce f_{\mfq_j}(w-s_j)\vert^{\frac{p}{k}} \zeta(\frac{s_j-w+w_\mbq}{K}) ds_j.
  \end{align*}
 For the last inequality we use H\"older's inequality and, as before,  \eqref{compare2}
 follows since $|w-w_\mbq|\le  K\sqrt d$. The second inequality in \eqref{compare1}  can be shown by the same argument. 
This completes proof. 
\end{proof}

Fix $2\le k\le n$ and  $\mbq$. We set 
\[ \mathcal Q^\mbq_s= \big \{\mfq:   \vert \cqq\vert < K^{2-2n}\max_{\mfq}\vert \cqq \vert \big \}, \quad  \mathcal   Q^\mbq_l=\big \{\mfq:   \vert \cqq\vert\ge K^{2-2n}\max_{\mfq}\vert \cqq \vert  \big \}.\]
By Lemma \ref{compare}, for $w\in \mbq$, 
\begin{equation}
\label{sm}  \sum_{\mfq\in \mathcal Q^\mbq_s}  |\ce f_\mfq(w)|\le \max_{\mfq}  \cqq \lesssim  {\rm I} :=   K^{-2n^2}  \int   \max_{\mfq}\vert    \ce f_{\mfq}(w -s)\vert \widetilde \zeta(\frac{s}{K}) ds.
\end{equation}
We sort $\mathcal Q^\mbq_l$  into two cases. 
It is clear that there are only the following two cases:
\\
{\bf Case 1:}
There are  $\mfq_1, \mfq_2, \cdots, \mfq_k$ such that
\begin{equation}\label{transv}
\vert \mathbf n^\phi( z_{1})\wedge \mathbf n^\phi( z_{2}) \wedge \cdots \wedge \mathbf n^\phi( z_{k}) \vert > \frac{c}{K^{2k}}, \quad \forall z_i\in \mfq_i,  i=1,\dots, k.
\end{equation}
{\bf Case 2:} There is a $(k-1)$-dim $\C$-subspace $V_{k-1}=V_{k-1}(\mbq)$ such that
\begin{equation}
\text{dist}(\mathbf n^\phi(\mfq),V_{k-1})\lesssim K^{-1}, \quad \forall \mfq\in \mathcal Q^\mbq_l.
\end{equation}

Now let $\mfq\in \mathcal Q^\mbq_l$.  In {\bf Case 1}, it follows that $\cqq \le K^{2n-2}   \Big(   \prod_1^k  C_{\mfq_j}^{\,\mbq}  \Big)^{\frac{1}{k}}$. 
Thus  from \eqref{compare1} and \eqref{compare2}  we have, for $w\in \mbq$,
\[ 
|\ce f_\mfq(w)|^p \lesssim  C(K)
 \prod_{j=1}^k \int  \vert \ce f_{\mfq_j}(w-s_j)\vert^{\frac{p}{k}} \widetilde \zeta(\frac{s_j}{K}) ds_j  
 \]
for some $\mfq_1, \mfq_2, \cdots, \mfq_k$ satisfying \eqref{transv}.  There are as many as $O(K^{2n-2})$ $\mfq$.
We make the right hand side independent of $\mbq$ by considering all the possible choices of $\mfq_1, \mfq_2, \cdots, \mfq_k$ satisfying \eqref{transv}.  Indeed,  after a simple manipulation we have  
\begin{equation}
\label{mult}   
(\sum_{\mfq\in \mathcal Q^\mbq_l}  |\ce f_\mfq(w)|)^p \lesssim   {\rm I\!I}:= C(K)  \sum_{\substack{(\mfq_1, \mfq_2, \cdots, \mfq_k)\\ \text{satisfying } \eqref{transv}}} 
 \prod_{j=1}^k \int  \vert \ce f_{\mfq_j}(w-s_j)\vert^{\frac{p}{k}} \widetilde \zeta(\frac{s_j}{K}) ds_j .
 \end{equation}

Now we consider {\bf Case 2}. Let us set $V_{k-1}'=\lbrace z :   \mathbf n^\phi(z)\in V_{k-1} \rbrace$.  Since the hessian matrix of $\phi$ is non-singular, from the inverse function theorem we see that $V_{k-1}'$ is a manifold of dimension $(2k-4)$.   And observe that, for $\mfq\in \mathcal Q_l^\mbq$, 
\begin{equation}\label{confined}
  \mfq\subset \{ z:\dist(z, V_{k-1}')\le C_{d, M}    K^{-1}\}.
  \end{equation}
Clearly we have 
$$\vert \widetilde w-w\vert \gtrsim  K^{-1}  \Longleftrightarrow \vert \mathbf n^\phi(\widetilde w)- \mathbf n^\phi(w)\vert \gtrsim K^{-1}.$$
Since each $\mfq$ is a cube of size $K^{-1}$ and contained in  a $CK^{-1}$-neighborhood of the $(2k-4)$-manifold  $V_{k-1}'$.  It follows that 
\begin{equation} 
\label{number}
 \#  \mathcal Q_l^\mbq\lesssim  K^{2k-4}. \end{equation} 
Since  $\ce f_{\mfq}$ is bounded by $\cqq$ on  $\mbq$,   it is easy to see 
$$ 
\int_{\mbq}  \big\vert \sum_{\mfq\in \mathcal Q_l^\mbq} \ce f_\mfq \big\vert^p dw 
\lesssim 
\Big( \int_{\mbq}\big| \sum_{\mfq\in \mathcal Q_l^\mbq}\ce f_{\mfq} \big|^2 dw\Big)
                        \Big( \sum_{\mfq\in \mathcal Q_l^\mbq} \cqq \Big)^{p-2}.
$$
Let $\mu$ be a smooth function satisfying that  $\mu \sim1$ on $\mbq$, and $\widehat \mu$\, is supported in a ball of radius $\sim K^{-1}$. 
Since 
$ \int_{\mbq}|\sum_{\mfq\in \mathcal Q_1^\mbq}\ce f_{\mfq}|^2dw\lesssim \int |\sum_{\mfq\in \mathcal Q_1^\mbq}\ce f_{\mfq}|^2 |\mu|^2dw,$
Plancherel's theorem and orthogonality between $\ce f_{\mfq}\cdot \mu$ give
\begin{eqnarray*}
\int_{\mbq}  \big\vert \sum_{\mfq\in \mathcal Q_l^\mbq} \ce f_\mfq \big\vert^p dw 
& \lesssim & |\mbq| 
\Big( \sum_{\mfq\in \mathcal Q_l^\mbq} (\cqq)^2 \Big)\Big( \sum_{\mfq\in \mathcal Q_l^\mbq} \cqq \Big)^{p-2}
\\
& \lesssim  & (\#\mathcal Q_l^\mbq)^{1-\frac{2}{p}+(p-2)(1-\frac{1}{p})}\sum_{\mfq}(\cqq)^{p}  |\mbq| . 
\end{eqnarray*}
By \eqref{compare1} and H\"older's inequality,  $ (\cqq)^p\lesssim   K^{-2n} \chi_\mbq(w)\int \vert \ce f_{\mfq}(w -z)\vert^p\widetilde \zeta(\frac{z}{K}) dz$. Thus, combining the above with \eqref{number}, we have
\begin{eqnarray}
\int_{\mbq}  \big\vert \sum_{\mfq\in \mathcal Q_l^\mbq} \ce f_\mfq \big\vert^p dw
& \lesssim & 
  \int_{\mbq} {\rm I\!I\!I}\, dw , 
\label{confined}
\end{eqnarray}
where 
\[ {\rm I\!I\!I}=K^{(2k-4)(p-{2})}   \int   \sum_{\mfq} |\ce f_{\mfq}(w -s)|^p \widetilde \zeta(\frac{s}{K})  ds.\]

Therefore,  putting \eqref{sm}, \eqref{mult}, and \eqref{confined}  together, 
we get 
\[  \int_{\mbq} |\sum_\mfq \ce   f_\mfq (w)|^p  dw \lesssim \int_\mbq {\rm I}^p {\rm+ I\!I+ I\!I\!I}\, dw .  \]
Note that ${\rm I}$, ${\rm I\!I}$, and ${\rm I\!I\!I}$ are independent of $\mbq$. 
  Thus, by summation along $\mbq$  we have
 \[ \int_{Q(0,R)} | \ce   f(w)|^p  dw  \lesssim   \int_{Q(0,R)} {\rm I}^p+ {\rm I\!I+ I\!I\!I}\,  dw.  \] 
 
By \eqref{sm}, the imbedding $\ell^p\hookrightarrow\ell^\infty$, and Lemma \ref{pscaling} we have 
\begin{align*}
 \int_{Q(0,R)} {\rm I}^p\, dw 
                        &\lesssim  K^{-2n}  \int   \sum_{\mfq}  \int_{Q(0,R)} \vert    \ce f_{\mfq}(w -s)\vert^p \, dw \widetilde  \zeta(\frac{s}{K}) ds
                        \\
                        \lesssim  & \ K^{ {4n}-(2n-2)p}  \mathcal A_p^p(R)     \sum_{\mfq} \| f_{\mfq}\|_p^p
                        \  \le  \ K^{ {2n}-(2n-2)p} \mathcal A_p^p(R) \|f\|_p^p\,.\end{align*}
By Theorem \ref{mlrk} we have, for $p>\frac{2k}{k-1}$, 
\begin{align*}
 \int_{Q(0,R)} {\rm I\!I}\, dw 
                        &\lesssim  C(K)  \sum_{\substack{(\mfq_1, \mfq_2, \cdots, \mfq_k)\\ \text{satisfying } \eqref{transv}}}  \ 
\int   \int_{Q(0,R)}   \prod_{j=1}^k \vert \ce f_{\mfq_j}(w-s_j)\vert^{\frac{p}{k}}  dw\,  \prod_{j=1}^k  \widetilde \zeta(\frac{s_j}{K})   ds_j  \\
                        &\lesssim  C(K)  \|f\|_2^p\,.
                                                 \end{align*}
Repeating the same argument for ${\rm I}^p$, it is easy to see that 
\begin{align*}
\int  {\rm I\!I\!I}\,  dw \lesssim   K^{2(2(n-k+2)-p(n-k+1))} \mathcal A_p^p(R) \|f\|_p^p.
\end{align*}
 Combining all together we have
 \[ \Vert \mathcal E f\Vert^p_{L^p_{Q(0,R)}}\lesssim C(K) \|f\|_p + CK^{-\epsilon_\circ}\mathcal A_p(R) \|f\|_p\]
for some $\epsilon_\circ>0$ provided that 
\[p> \max_{2\le k\le  n-1}\Big(\frac{2(n-k+2)}{n-k+1},   \frac{2k}{k-1}\Big).\] 
The above inequality is valid for any $f$. With a sufficiently large $K$ such that  $CK^{-\epsilon_\circ}\le 1/2$,  we have 
$\mathcal A_p(R)\le C(K)$. We may choose $k=\frac{n}{2}+1$ since $n$ is even. 
This gives   $\mathcal A_p(R)\le C(K)$ for $p>\frac{2(n+2)}{n}$, and completes the proof.
\end{proof}

\subsection{General holomorphic function}
Now we consider the general holomorphic function $\phi$ with nonzero $\det H\phi$ and prove Theorem \ref{main}. The proof of Proposition \ref{m1} works with a slight modification. Since  $\phi$ is no longer a quadratic polynomial, we need to modify the induction quantity 
$\mathcal A_p(R)$. 

Let denote $\mathfrak F(\delta_\circ,r)$ by the collection of function $g$ which satisfies 
\vspace{-7pt}
\begin{enumerate}
[leftmargin=0.24in]
\item  $g$ is analytic on $\overline {Q(0,r)}$.
\item  $g(0)=0,$ and $\nabla g(0)=0$.
\item  $|\partial^{\alpha} _z (g(z)-\frac12 z\cdot z)|<\delta_\circ $  for all $z\in\overline {Q(0,r)}$ and $2\le |\alpha|\le  2n+2$.
\end{enumerate}

Considering the expansion  
$g(z+z_0)- g(z_0)-\nabla g(z_0)\cdot z= \frac12 z^tHg(z_0) z+O(|z|^3),$ we set 
\[g_{z_0}^{\epsilon}(z):=\frac{\ve^{-2}}2\big(g(\ve z+z_0)- g(z_0)-\nabla g(z_0)\cdot \ve z\big)= \frac12  z^tHg(z_0) z+O(\ve|z|^3).\]
Since the matrix $Hg(z_0)$ varies, it is desirable to normalize the second order term. 
The matrix $Hg(z_0)$ is symmetric but not hermitian. However, by 
Takagi's decomposition  
we may write  \[Hg(z_0)=U^t(z_0)D(z_0)U(z_0)\] with a diagonal matrix $D(z_0)$  and a unitary matrix $U(z_0)$ (for example, see \cite{HJ}). 
Let $\lambda_1(z_0), \dots, \lambda_{n-1}(z_0)$ be the diagonal entries of $D(z_0)$.
Since $Hg$ is non singular,  we clearly see there are $c, C>0$ such that $ c\le |\lambda_1(z_0)|, \dots, |\lambda_{n-1}(z_0)|\le C$. 
Let us define 
\[ [g]_{z_0}^{\epsilon}(z):=  g_{z_0}^{\epsilon}(  (\sqrt{D(z_0)}U(z_0))^{-1})z)=\frac12 z\cdot z+O(c^{-1}\ve|z|^3). \]
This shows that any analytic function with nonsingluar hessian matrix can be harmlessly transformed to a function contained in $\mathfrak F(\delta_\circ)$ by an affine transform. Thus, by decomposing the operator we may regard the extension operator $\ce_{D}^\phi$ as a finite sum of  $\ce_{Q(0,1)}^\phi$ with $\phi\in \mathfrak F(\delta_\circ,1+\epsilon_\circ)$ with sufficiently small $\delta_\circ, \epsilon_\circ>0$. Therefore,  for the proof  of  Theorem \ref{main} we only need to consider  the extension operator given by this type of normalized $\phi\in \mathfrak F(\delta_\circ,1+\epsilon_\circ)$.  
The above argument also shows 

\begin{lemma}\label{scaled} Let $\delta_\circ$, $\epsilon_\circ>0$. Suppose that $g\in \mathfrak F(\delta_\circ,1+\epsilon_\circ)$ and $z_0\in Q(0,\frac12)$. Then, there is a constant $c>0$, independent of $g$ and $z_0$, such that 
$[g]_{z_0}^{\epsilon}\in  \mathfrak F(\delta_\circ,1)$ provided that  $0<\ve<c$. 
\end{lemma}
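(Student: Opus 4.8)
The plan is to verify the three defining conditions of $\mathfrak F(\delta_\circ,1)$ directly for the function $h:=[g]_{z_0}^{\epsilon}$, keeping careful track of how the affine change of variables interacts with the Faà di Bruno/chain rule. First I would record the composition structure: writing $A:=(\sqrt{D(z_0)}U(z_0))^{-1}$, we have $h(z)=g_{z_0}^{\epsilon}(Az)$ where $g_{z_0}^{\epsilon}(w)=\tfrac{\ve^{-2}}{2}\big(g(\ve w+z_0)-g(z_0)-\nabla g(z_0)\cdot\ve w\big)$. Condition (2) is immediate: $h(0)=g_{z_0}^{\epsilon}(0)=0$ because the bracketed expression vanishes to second order at $w=0$, and likewise $\nabla h(0)=0$ since $\nabla_w g_{z_0}^{\epsilon}(0)=\ve^{-1}\big(\nabla g(z_0)-\nabla g(z_0)\big)=0$ and the chain rule carries this through $A$. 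Condition (1), analyticity on $\overline{Q(0,1)}$, holds provided $\ve$ is small enough that $\ve A(\overline{Q(0,1)})+z_0\subset \overline{Q(0,1+\epsilon_\circ)}$; since $z_0\in Q(0,\tfrac12)$ and $\|A\|$ is bounded above and below by constants depending only on the ellipticity bounds $c\le|\lambda_j|\le C$ (which in turn, by a compactness/continuity argument, are uniform over $g\in\mathfrak F(\delta_\circ,1+\epsilon_\circ)$ once $\delta_\circ$ is fixed), this is achieved for all $\ve<c_1$ for some $c_1$ independent of $g$ and $z_0$.

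The substance is condition (3): the estimate $|\partial_z^\alpha(h(z)-\tfrac12 z\cdot z)|<\delta_\circ$ for $2\le|\alpha|\le 2n+2$ on $\overline{Q(0,1)}$. Here I would split $h(z)-\tfrac12 z\cdot z$ into two pieces. Using the Takagi decomposition $Hg(z_0)=U^t D U$ one checks that the quadratic part of $h$ is exactly $\tfrac12 z^t (A^t Hg(z_0) A) z=\tfrac12 z\cdot z$, so the quadratic terms cancel identically and what remains is the "cubic and higher" remainder $h(z)-\tfrac12 z\cdot z = \tfrac{\ve^{-2}}{2}\big(g(\ve Az+z_0)-g(z_0)-\nabla g(z_0)\cdot\ve Az - \tfrac12(\ve Az)^t Hg(z_0)(\ve Az)\big)$, i.e. $\ve^{-2}/2$ times the third-order Taylor remainder of $g$ at $z_0$ evaluated at the point $\ve Az$. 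Now I would Taylor-expand $g$ around $z_0$ with integral remainder: the third-order remainder is $O(\ve^3)$ in size with constants controlled by the bounds on $\partial^\beta g$, $|\beta|=3$, which by condition (3) for $g\in\mathfrak F(\delta_\circ,1+\epsilon_\circ)$ are bounded by $\delta_\circ+O(1)$ (the $O(1)$ coming from the $\tfrac12 z\cdot z$ piece, whose third derivatives vanish — so actually just $\lesssim \delta_\circ$, or in any case a fixed constant). Multiplying by $\ve^{-2}$ leaves a factor $\ve$, and differentiating $\alpha$ times in $z$ brings down factors of $\|A\|^{|\alpha|}$ and requires control of $\partial^\beta g$ for $|\beta|$ up to $2n+2$, again bounded uniformly. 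Hence $|\partial_z^\alpha(h(z)-\tfrac12 z\cdot z)|\le C_2\,\ve$ for $2\le|\alpha|\le 2n+2$ and $z\in\overline{Q(0,1)}$, with $C_2$ independent of $g$ and $z_0$; choosing $c:=\min(c_1,\delta_\circ/C_2)$ gives the claim.

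The main obstacle I anticipate is bookkeeping, not conceptual: one must make sure all the implied constants — the operator-norm bounds on $A$, the Taylor-remainder constants, and the derivative bounds generated by the chain rule through $A$ at orders up to $2n+2$ — are genuinely uniform over the family $\mathfrak F(\delta_\circ,1+\epsilon_\circ)$ and over $z_0\in Q(0,\tfrac12)$. The uniformity of the $A$-bounds is the only slightly delicate point and follows because the eigenvalue bounds $c\le|\lambda_j(z_0)|\le C$ are uniform (they are exactly the statement right before the lemma, combined with the fact that the entries of $Hg$ are uniformly bounded in $C^0$ over the family), so the Takagi factors and hence $A$ lie in a fixed compact set of invertible matrices. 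Everything else is a routine application of Taylor's theorem with the chain rule, with the single gain of one power of $\ve$ doing all the work.
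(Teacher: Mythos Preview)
Your proof is correct and is exactly a fleshed-out version of the paper's own argument: the paper does not give a separate proof of the lemma but simply observes, just before stating it, that the Takagi normalization yields $[g]_{z_0}^{\epsilon}(z)=\tfrac12 z\cdot z+O(c^{-1}\varepsilon|z|^3)$ and then writes ``The above argument also shows'' the lemma. Your verification of the three conditions---analyticity via the domain inclusion, the vanishing of $h(0)$ and $\nabla h(0)$, and the $O(\varepsilon)$ control on $\partial^\alpha(h-\tfrac12 z\cdot z)$ for $2\le|\alpha|\le 2n+2$ using the exact cancellation of the quadratic part and the uniform bounds on $A$ coming from the eigenvalue bounds---is precisely the routine check the paper is leaving to the reader.
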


Let us set $Q=Q(0,1)$. 
For $R\ge 1$ and a given $p\ge 1$,   we  define 
\[   \widetilde{\mathcal A}^p(R) =\sup\big\{ \| \mathcal E^\phi_{Q} f\|_{L^p(Q(0,R))} :  \|f\|_p\le 1, \phi\in \mathfrak F(\delta_0, 1+\epsilon_0)\big\} .   \] 
Then, we have the following which is a variant of Proposition \ref{pscaling}.

\begin{lemma}[Parabolic rescaling]
\label{pscaling-var}
Let $\delta_\circ$, $\epsilon_\circ >0$. Suppose $\phi\in \mathfrak F(\delta_0, 1+\epsilon_0)$, then there is an $r_\circ>0$ and $C>0$, independent of  $\phi$,  such that 
$$\Vert \mathcal E f \Vert_{L^p_{Q(0,R)}}\leq Cr^{2(n-1)-\frac{4n}{p}}\widetilde{\mathcal A}^{p}(R)\Vert f\Vert_p$$
whenever  $f$ is supported in  $Q(z_0,r)\in Q(0,1)$ with $r\le r_\circ$. 
\end{lemma}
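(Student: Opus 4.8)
The plan is to mimic the proof of Lemma \ref{pscaling} almost verbatim, the only new ingredient being that rescaling a function $\phi\in\mathfrak F(\delta_0,1+\epsilon_0)$ around a point $z_0$ produces, after the appropriate affine normalization, another function in $\mathfrak F(\delta_0,1)$, which is exactly the content of Lemma \ref{scaled}. First I would set $\psi=\phi$ and, given $f$ supported in $Q(z_0,r)$, write
\begin{align*}
\mathcal E^\phi_Q f(w)&=\int_{Q(z_0,r)} e^{iw\odot(z,\phi(z))}f(z)\,dz\\
&=\int_{Q(z_0,r)} e^{i(w'+w_n\nabla\phi(z_0))\odot z + iw_n\odot(\phi(z)-\nabla\phi(z_0)\cdot z)}f(z)\,dz,
\end{align*}
splitting off the affine part of the phase exactly as before; here $\phi(z_0)$ contributes only a unimodular constant which I drop. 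Substituting $z=z_0+rz'$ (with $r$ playing the role of $\epsilon$ in the definition of $g_{z_0}^\epsilon$), the remaining quadratic-and-higher part of the phase becomes $r^2 w_n\odot\big(\tfrac{2}{r^2}(\phi(z_0+rz')-\phi(z_0)-\nabla\phi(z_0)\cdot rz')\big)\cdot\tfrac12 = r^2 w_n\odot \phi^{r}_{z_0}(z')$ up to the factor $2$, i.e. the phase is governed by $\phi^{r}_{z_0}$ in the notation preceding Lemma \ref{scaled}.

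Next I would absorb the Takagi normalization: writing $H\phi(z_0)=U^t(z_0)D(z_0)U(z_0)$ and making the further linear change of variables $z'\mapsto (\sqrt{D(z_0)}U(z_0))^{-1}z'$ in the $z'$ integral, the phase is now driven by $[\phi]^{r}_{z_0}$, which by Lemma \ref{scaled} lies in $\mathfrak F(\delta_0,1)$ as soon as $r\le r_\circ:=c$. This linear change of variables has a Jacobian comparable to $1$ uniformly in $z_0$, since $c\le|\lambda_j(z_0)|\le C$, so it contributes only a harmless constant. The net effect is the identity
\[
\big|\mathcal E^\phi_Q f(w)\big| = C\,r^{2n-2}\,\big|\mathcal E^{[\phi]^{r}_{z_0}}_Q \widetilde f\big(\Lambda(z_0)(w'+w_n\nabla\phi(z_0)),\, r^2 w_n\big)\big|,
\]
where $\widetilde f(z')=f(z_0+r(\sqrt{D(z_0)}U(z_0))^{-1}z')$ rescaled appropriately, $\Lambda(z_0)$ is the invertible matrix arising from the linear substitution, and $[\phi]^{r}_{z_0}\in\mathfrak F(\delta_0,1)$.

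Finally I would integrate in $w$ over $Q(0,R)$, change variables $w'\mapsto \Lambda(z_0)(w'+w_n\nabla\phi(z_0))$ and $w_n\mapsto r^2 w_n$ — the Jacobian being $\sim r^{2n+2}$ since $\det\Lambda(z_0)\sim 1$ — and enlarge the resulting domain of integration to $Q(0,R)$ (using $r\le r_\circ$ bounded and $\nabla\phi$ bounded on $\overline{Q(0,1+\epsilon_0)}$, so the image of $Q(0,R)$ sits inside $Q(0,CR)$; one rescales $R$ by a constant, or absorbs this into the definition of $\widetilde{\mathcal A}^p$ after a further partition of $Q(0,R)$). Invoking the definition of $\widetilde{\mathcal A}^p(R)$ — this is where the $\delta_0$ in the class $\mathfrak F$ must be the same as the one fixed in the definition of $\widetilde{\mathcal A}^p$, and the role of Lemma \ref{scaled} is precisely to guarantee this — and undoing the normalization of $\widetilde f$ to recover $\|f\|_p$, the powers of $r$ bookkeep to $r^{2(n-1)-4n/p}$: explicitly, $r^{2n-2}$ from the amplitude, $r^{-(2n+2)/p}$ from the $w$-Jacobian, and $r^{-(2n-2)/p}$ from converting $\|\widetilde f\|_p$ back to $\|f\|_p$, which combine to $r^{2(n-1)-(4n)/p}$. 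The main obstacle, such as it is, is purely bookkeeping: making sure the affine change of variables in $w$ (whose linear part $\Lambda(z_0)$ is not orthogonal, only comparable to the identity) does not distort the cube $Q(0,R)$ in a way that breaks the self-referential use of $\widetilde{\mathcal A}^p(R)$, and checking the exponent of $r$ is identical to that in Lemma \ref{pscaling}; both are handled by the uniform bounds $c\le|\lambda_j(z_0)|\le C$ and $\|\nabla\phi\|_{L^\infty(\overline{Q(0,1+\epsilon_0)})}\lesssim 1$ coming from membership in $\mathfrak F(\delta_0,1+\epsilon_0)$.
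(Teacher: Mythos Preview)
Your proposal is correct and follows precisely the approach the paper indicates: the paper itself omits the proof, stating only that it ``can be shown similarly as in the proof of Lemma \ref{pscaling} by making use of Lemma \ref{scaled},'' and your argument carries out exactly this---the affine splitting and parabolic substitution from Lemma \ref{pscaling}, followed by the Takagi normalization so that Lemma \ref{scaled} places the rescaled phase back in $\mathfrak F(\delta_0,1)$, with the uniform bounds on $|\lambda_j(z_0)|$ and $\|\nabla\phi\|_\infty$ controlling all Jacobians. One cosmetic slip: in your displayed identity the first argument should carry a factor of $r$ (i.e.\ $r\Lambda(z_0)(w'+w_n\nabla\phi(z_0))$), which you implicitly use anyway when you record the $w$-Jacobian as $r^{-(2n+2)}$.
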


This can be shown similarly  as in the proof of Lemma \ref{pscaling}  by making use of Lemma \ref{scaled}. So, we omit the proof. 
One may find a detailed argument  of similar nature in \cite{Lee}.

\begin{remark}
Complex analyticity assumption plays important roles in our overall argument. It has been used various steps, so
 it is not clear at the moment how to generalize the result to general 2-dimension surfaces without complex analyticity. 
\end{remark}

\begin{remark}\label{oddrmk}
The advantage of analyticity  is compensated by new difficulty which results from lack of curved property of the complex surfaces. 
Unlike the case of elliptic surfaces, restriction of  the surface to lower dimensional vector spaces does not necessarily   guarantee  
persistence of the transversality since the complex analytic quadratic function can be factorized. 
 For example, let $n=2k+1$ and  consider a complex surface given by a holomorphic function 
\[\phi(z_1,z_2)=z_1^2+z_2^2+\dots+z_{n-1}^2=(z_1+iz_2)(z_1-iz_2)+\dots+(z_{n-2}+iz_{n-1})(z_{n-2}-iz_{n-1}).\]
If we take $V_{k}=\{z: (z_1-iz_2)=c_1, \dots,  (z_{n-2}-iz_{n-1})=c_k\}$.  The restriction of the surface to $V_k$ does not have any curved property. 
\end{remark}

\subsection{Almost complex structure}
Now we consider slightly more general manifolds of codimension 2. We recall the definition of almost complex structure. 
\begin{definition}
Let $V$ be a real vector space of dimension $2n$. The automorphism $J:V\longrightarrow V$ is called  an almost complex structure if $J^2=-id$.
\end{definition}
This  gives a complex structure on $V$. We can easily check that $V$ becomes a complex vector space with $i\cdot v:=J(v)$ for all $v\in V$.  Suppose $\left( V,\langle \cdot,\cdot \rangle \right)$ is an inner product space. We say the almost complex structure $J$ is compatible with the inner product if $\langle J(u), J(v)\rangle=\langle u,v\rangle$ holds for all $u,v \in V$. In what follows, by almost complex structure we mean an almost complex structure which is compatible with the inner product.

Consider $V=\R^{2n-2}$ with usual inner product. We may regard  an almost complex structure $J$ as a matrix. Then, by using simple linear algebra, $J$ is an almost complex structure  (compatible with the inner product)  if and only if $J$ is a skew-symmetric orthogonal matrix. Using this $J$, we define our codimension 2 surface by a graph of $\left( \phi_1 (z), \phi_2 (z) \right)$ for $z\in \R^{2n-2}$ which satisfies
\begin{equation}\label{ac}
\nabla\phi_2 (z)=J\nabla\phi_1(z).
\end{equation}
We call these kind of manifolds "almost complex hypersurfaces".
For example, if
\[J=J_0\coloneqq
\begin{pmatrix}
P & 0  &\hdots  & 0  \\
 0 & P&\hdots  & 0\\
 \vdots & \vdots  &\ddots & \vdots \\
 0 & 0&\hdots  & P
\end{pmatrix}
\]
where 
$P=\begin{pmatrix}
0 & -1\\
1 & 0
\end{pmatrix}$, then \eqref{ac} is just the Cauchy-Riemann equation and the case corresponds to the complex analytic case. 

Since the minimal polynomial of $P$ is $x^2+1$, it is diagonalizable. Moreover, the eigenspaces of $i$ and $-i$ have the same dimension. Using this fact, and by suitable linear changes, we indeed know that almost complex hypersurfaces are essentially complex hypersurfaces. In other words, imposing different almost complex structure just determines how we identify $\R^{2n-2}$ with $\C^{n-1}$. Let $v_1, v_2, \cdots, v_{n-1}$ be eigenvectors with respect to the eigenvalue $i$. Since $J$ is a real matrix, $\overline{v_1}, \overline{v_2}, \cdots , \overline{v_{n-1}}\in\R^{2n-2}$ are eigenvectors with respect to the eigenvalue $-i$. Thus, the following $(2n-2)\times (2n-2)$ matrix is real and invertible:
$$L=\Big(
\frac{v_1+\overline{v_1}}{2}, \frac{v_1-\overline{v_1}}{2i},  \cdots,  \frac{v_{n-1}+\overline{v_{n-1}}}{2},  \frac{v_{n-1}-\overline{v_{n-1}}}{2i}
\Big).$$
Clearly, $ L^{-1}JL=J_0.$
This means that we can always reduce the restriction problem for the almost complex hypersurfaces to that of the complex hypersurfaces.

{\bf Acknowledgement.}  J. Lee was  supported by NRF-2017H1A2A1043158 and S. Lee  was supported in part by NRF-2018R1A2B2006298.  

\end{document}